\UseRawInputEncoding 
\documentclass[reqno,10pt]{amsart}
\usepackage{amssymb,amsmath,amsthm,amsfonts,color}

\usepackage{mathrsfs,dsfont, comment,mathscinet}

\usepackage{enumerate,esint}
\usepackage[left=2.8cm,right=2.8cm,top=2.8cm,bottom=2.8cm]{geometry}
\parskip1mm
\usepackage{mathtools}
\usepackage{graphicx}

\usepackage{epstopdf}
\DeclareGraphicsRule{.tif}{png}{.png}{`convert #1 `dirname #1`/`basename #1 .tif`.png}

\usepackage[colorlinks=true, pdfstartview=FitV, linkcolor=blue, 
            citecolor=blue, urlcolor=blue]{hyperref}

\tolerance=10000
\allowdisplaybreaks
\numberwithin{equation}{section}
\theoremstyle{plain}
\newtheorem{theorem}{Theorem}[section]
\newtheorem{proposition}[theorem]{Proposition}
\newtheorem{lemma}[theorem]{Lemma}

\theoremstyle{definition}
\newtheorem{definition}[theorem]{Definition}
\newtheorem{remark}[theorem]{Remark}



\usepackage{esint}
\def\R{{\mathbb R}}
\def\N{{\mathbb N}}
\def\p{{\varphi}}
\def\O{{\Omega}}
\def\Vol{{\operatorname{Vol}}}
\def\B{{\mathbb B}}
\def\n{{\nabla}}

\def\e{{\varepsilon}}
\def\uar{{u_{\alpha,\rho}}}
\def\duar{{\dot u_{\alpha,\rho}}}




\newcommand\UUU{\color{black}}

\newcommand\EEE{\color{black}}
\title[Spectral optimisation of inhomogeneous plates]{Spectral optimization of inhomogeneous plates} 

\author[E. Davoli]{Elisa Davoli}
\address[E. Davoli]{Institute of Analysis and Scientific Computing, TU Wien, Wiedner Hauptstra\ss e 8-10, 1040 Vienna,
Austria}
  \email{elisa.davoli@tuwien.ac.at}
\author[I. Mazari]{Idriss Mazari}
\address[I. Mazari]{Institute of Analysis and Scientific Computing, TU Wien, Wiedner Hauptstra\ss e 8-10, 1040 Vienna,
Austria}
  \email{idriss.mazari@tuwien.ac.at}
\author[U. Stefanelli]{Ulisse Stefanelli} \address[U. Stefanelli]{Faculty of Mathematics, University of Vienna, Oskar-Morgenstern-Platz 1, A-1090 Vienna, Austria.
Vienna Research Platform on Accelerating Photoreaction Discovery, University of Vienna, W\"ahringerstra\ss e 17, 1090 Wien, Austria.
Istituto di Matematica Applicata e Tecnologie Informatiche {\it E. Magenes}, via Ferrata 1, I-27100 Pavia, Italy} \email{ulisse.stefanelli@univie.ac.at}
\keywords{
Spectral optimisation, two-phase problems, inhomogeneous plates, $H$-convergence, rearrangement inequalities.}

\subjclass[2010]{35J15, 35P15, 35Q93, 49Q10, 49J20, 49J30}

\begin{document}

\maketitle

\begin{abstract}
This article is devoted to the study of spectral optimisation  for  inhomogeneous plates.  In particular, we  optimise the first eigenvalue of a vibrating plate with respect to  its thickness and/or density. Our result is threefold. First, we prove existence of an optimal thickness,  using fine tools \UUU hinging on \EEE topological properties of rearrangement classes.  Second, in the case \UUU of a circular plate, \EEE  we provide  a characterisation of  this  optimal thickness  by means of  Talenti inequalities. Finally, we prove a  stability  result when assuming that the thickness and \UUU the \EEE density of the plate are  linearly  related. This proof relies on  $H$-convergence tools applied to biharmonic operators. 
\end{abstract}


\section{Introduction}
The study of  eigenmodes  optimisation  is central to the  theory of inhomogeneous elastic plates  and is of  great applicative relevance. A  vast  literature has been devoted to the analysis of spectral optimisation problems for biharmonic operators, modelling plates of varying density and thickness  under different settings  \cite{Anedda2010,Anedda,Berchio2020,Bucur2011,BuosoFreitas,Buoso2013,Buoso2015,Colasuonno2019,Kang2017,Kao2021}.  In addition,  several contributions  are devoted  to inverse problems arising in the study of such inhomogeneous plates \cite{Jadamba2015,Manservisi2000,Marinov2013}. In  the  latter context, the main objective is to  identify  some structural \UUU descriptors \EEE of the plate under consideration, such as its thickness or its bending stiffness, and the outlook on the problem is mostly computational. 

The goal of this article is to provide answers to several theoretical questions that, to the best of  our  knowledge, have not received a mathematical treatment  so far.  We focus on the optimization of thickness and/or density with respect to the first eigenvalue. For fixed density, we prove the existence of an optimal thickness. This calls for the implementation of a delicate argument, based on rearrangements. We then investigate the symmetry of the optimal solution in specific geometries, showing analogies with previously studied cases  \cite{Anedda2010,Anedda}.  Eventually, we prove   a stability result for  the case  in which the thickness and the density of the plate are  linearly  related.

 In order to make the discussion more precise, let  $\Omega$ be a bounded domain in $\mathbb{R}^2$ with $\mathscr C^2$ boundary, representing the reference  mid-surface  configuration of  a thin  plate \UUU at rest, \EEE and let  $D,g \in L^\infty(\O)$. The function $D$ describes the varying {\it thickness} of the plate.  Its lower bound is normalized  by assuming that $D\geq 1$,  where the inequality is  meant to hold almost everywhere in $\O$.  The function $g$ accounts for the {\it heterogeneity} of the plate. We are hence  led   to consider  the first eigenvalue associated with the natural vibration of the plate.  In variational terms,    this eigenvalue admits the following Rayleigh-quotient  representation 
\begin{equation}
  \tilde\Lambda(D,g)=\inf_{u\in W^{2,2}(\O)\cap W^{1,2}_0(\O),\, u\neq0}\frac{\int_\O D \left(\Delta u\right)^2}{\int_\O g u^2}.\label{eq:la0}\end{equation}
The associated eigenfunction $v_{D,g}$ satisfies the following  elliptic problem 
\begin{equation}\label{eq:eig-D}\begin{cases}\Delta \left(D \Delta v_{D,g}\right)=\tilde{\Lambda}(D,g) g v_{D,g}\text{ in }\O,
\\ v_{D,g}=\Delta v_{D,g}=0\text{ on }\partial \O.\end{cases}\end{equation}
The most general formulation of the optimisation problem under consideration,  covering  questions  from   \cite{Anedda2010,Anedda,Berchio2020,Colasuonno2019,Kang2017,Kao2021}, is the study of the qualitative properties of  solutions to the minimisation  problem 
\begin{equation}\inf_{D,g}\tilde{\Lambda}(D,g).\label{eq:ref}\end{equation}

 From the modeling viewpoint, the reference \UUU application consists in \EEE reinforcing the plate locally by adding a layer of material, hence increasing the thickness, or by combining two materials, hence increasing the density. These cases \UUU translate in \EEE the choice 
\begin{equation}D=1+\beta_0 \mathds 1_\omega,\, g=1+\delta_0\mathds 1_{\omega'}\end{equation} for two measurable subsets $\omega,\, \omega'$ on which we can act,  where $\mathds 1$ is the corresponding characteristic function.  This in turn leads to considering $L^\infty$ and $L^1$ constraints on $D$ and $g$.
 We hence  introduce the following admissible classes  for thickness and  heterogeneity,  where $\beta_0,\, \delta_0,\, D_0,\, g_0$ are fixed  positive  parameters:  
 \begin{align}\mathcal N(\O):=&\left\{D\in L^\infty(\O) \ : \   1\leq D\leq 1+\beta_0,\, \int_\O D=D_0\right\},\label{eq:N}\\
   \mathcal N'(\O):=&\left\{g\in L^\infty(\O) \ : \ 1\leq g\leq 1+\delta_0,\, \int_\O g=g_0\right\}.\end{align} The  main minimization problem \eqref{eq:ref} is then specified as follows  
\begin{equation}\label{Eq:PvIntro}
\inf_{D\in \mathcal N(\O),\, g\in \mathcal N'(\O)}\tilde\Lambda(D,g).
\end{equation}

 Let us start by removing a 
 difficulty  related  to the definition of the eigenvalue, which is that the potential term $\tilde{\Lambda}(D,g)gv_{D,g}$  in \eqref{eq:eig-D}    appears in a  multiplicative form.  As  it  is customary in eigenvalue optimisation, arguing as in \cite[Theorem 13]{Chanillo2000}  we reformulate the problem by referring to the {\it density \UUU (excess)} $\rho$ of the plate instead of its heterogeneity. In particular, we  introduce the class  of admissible densities 
\begin{equation}\mathcal  M(\O):=\left\{\rho \in L^\infty(\O) \ : \  0\leq \rho\leq 1,\, \int_\O \rho=\rho_0\right\}\label{eq:M}\end{equation} and, for $D\in \mathcal N(\O)$,  we  define the first eigenvalue 
\begin{equation}\Lambda(D,\rho):=\inf_{u\in W^{2,2}(\O)\cap W^{1,2}_0(\O),\,  u \not = 0}\frac{\int_\O D \left(\Delta u\right)^2-\int_\O \rho u^2}{\int_\O u^2}.\label{eq:la}\end{equation} 
Up to a scaling factor, proceeding along the lines of \cite[Theorem 13]{Chanillo2000},  solving \eqref{Eq:PvIntro} is equivalent to  finding  solutions to  \begin{equation}\label{Eq:PvIntro2}
\inf_{D\in \mathcal N(\O),\,\rho \in \mathcal M (\O)}\Lambda(D,\rho).
\end{equation}
 We prefer to work with formulation \eqref{Eq:PvIntro2}, for the normalization term $\int_\O u^2=1$ in the denominator in \eqref{eq:la} is independent of $\rho$ (compare with \eqref{eq:la0}). 

 Most  contributions  on the minimization problem \eqref{Eq:PvIntro2} focus on the case of fixed thickness  $D\equiv 1$ and the optimisation is carried  out  with respect to $\rho$  only,   either  under Navier boundary conditions, or under clamped boundary conditions, see for instance \cite{Anedda2010,Anedda,Kang2017}. In these contributions, rearrangements arguments and Talenti inequalities are used in order to derive Faber-Krahn-like inequalities,  delivering information on the geometry of minimizers.  On the other hand, the optimisation of the thickness  $D$  is  mostly treated  numerically   \cite{Berchio2020,Jadamba2015,Manservisi2000,Marinov2013} and the existence of  a minimizer $D^*$  is usually not ascertained,  to the best of our knowledge. Let us stress that existence  in this setting can be quite delicate  to obtain.  As a matter of comparison, let us recall that in the somehow related case of
 optimisation of the first eigenvalue of two-phases operators $-\nabla \cdot(D\n)$ under the constraint $D\in \mathcal N(\O)$, it is well-known \cite{MuratTartar,CasadoDiaz3} that no solution exists if $\O$ is not a ball.

 The first main result of the paper is hence an existence proof for an optimal thickness $D^*$ for \eqref{Eq:PvIntro2}, under fixed  $\rho\equiv 0$. In particular,
setting $\mu(D):=\Lambda(D,0)$, we investigate the minimisation problem $\underset{D\in \mathcal N(\O)}\inf \,\mu(D)$. We prove  in Theorem \ref{Th:ExistMu}  that, in any domain $\O$, a minimiser exists.  Note  that it is in sharp contrast with several other models involving heterogeneity in the leading term of the underlying elliptic operator (such as classical two-phases operators),  where existence strongly depends on the choice of the ambient space $\O$.  The proof of Theorem \ref{Th:ExistMu} relies on   delicate  topological properties of constraint classes defined through rearrangements and we will make use of some related results from  \cite{Alvino1989,ConcaMahadevanSanz}.

 Our second main result, Theorem \ref{Th:PV0},  focuses on the case when $\O$ is a ball.  In this case, we are able to  characterise the optimal thickness $D^*$  as being piecewise constant and radially symmetric. The argument \UUU is in \EEE the spirit   of \cite{Anedda,Anedda2010}.  In particular,  we use  Talenti inequalities  in combination with  rearrangement arguments.

 In our last main result, Theorem \ref{Th:Stability}, we  investigate the case  of  coupled  thickness  and density.  For simplicity, we focus on the case of a linear relation between these two quantities, namely,  $D=1+\alpha \rho$ for a small parameter $\alpha>0$.  Albeit linear, this case 
  already proves  very   challenging.  By  defining $\lambda_\alpha(\rho):=\Lambda(1+\alpha \rho,\rho)$, we give a fine stability analysis in the case where $\O$ is a ball, $\alpha$ is small, and all the functions involved are assumed to be radial.  In particular we obtain a \UUU stationary result: \EEE the minimisers $\rho^*$ in the case $\alpha=0$, which were \UUU already \EEE studied in  \cite{Anedda2010,Anedda,Kang2017}, remain optimal for $\alpha>0$ small enough. This proof relies on $H$-convergence-like tools, generalising to biharmonic operators a strategy developed in \cite{MazariNadinPrivat}.   

 The paper is organised as follows. In Section \ref{sec:setting} we specify the precise assumptions for our analysis and state our three main results. In Section \ref{Se:Preliminary} we collect some preliminary technical results. Sections \ref{sec:thm1}--\ref{sec:thm3} are devoted to the proofs of Theorems \ref{Th:ExistMu}--\ref{Th:Stability}. Eventually, Section \ref{sec:con} contains a summary of our findings.

\section{Mathematical setting and results}
 Throughout the paper, inequalities will always be meant in the sense of \UUU $L^1$ \EEE functions, namely almost everywhere in the corresponding set where the different quantities are defined. 
\label{sec:setting}
\subsection{Optimisation with respect to the thickness}
We first investigate optimisation with respect to the thickness of the plate.  Given two positive  parameters $\beta_0,\, D_0$, the admissible class of thicknesses  $\mathcal N(\O)$ is defined in \eqref{eq:N}, 
where  nonetheless  we assume that $D_0>\operatorname{Vol}(\O)$ in order to ensure that this class is not  empty or reduced to a single element.  For any $D\in \mathcal N(\O)$  we define  the first eigenvalue $\mu(D)$  given by  the Rayleigh quotient 
\begin{equation}\label{Eq:DefMu}\mu(D)=\inf_{u\in W^{1,2}_0(\O)\cap W^{2,2}(\O),\, u\neq 0}\frac{\int_\O D\left(\Delta u\right)^2}{\int_\O u^2},\end{equation} which is associated with the following eigenequation (where we have chosen a $L^2$ normalisation):
\begin{equation}\label{Eq:MuD}\begin{cases}
\Delta(D\Delta u_D)=\mu(D) u_D\text{ in }\O, 
\\ u_D=\Delta u_D=0\text{ on }\partial \O, 
\\  \int_\O u_D^2=1.
\end{cases}\end{equation}
We emphasise once again that this corresponds to problem \eqref{Eq:PvIntro} with $g\equiv 1$. The first optimisation problem we consider is 
\begin{equation}\label{Eq:PvMu}\inf_{D\in \mathcal N(\O)}\mu(D).\end{equation}
Our first result establishes the existence of a solution:
\begin{theorem}[Existence of minimisers]\label{Th:ExistMu}
For any bounded  domain $\Omega\subset \mathbb{R}^2$ with  $\mathscr C^2$  boundary  there exists $D^*\in \mathcal N(\O)$ such that 
\begin{equation}\inf_{D \in \mathcal N(\O)} \mu(D)=\mu(D^*).\end{equation}Furthermore, there exists a measurable set $\omega^*\subset \O$ such that $D^*=1+\beta_0 \mathds 1_{\omega^*}$. \end{theorem}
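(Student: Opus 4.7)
The plan is to reformulate the problem in the reciprocal variable $e:=1/D$, in which the first eigenvalue becomes the operator norm of a compact integral operator depending linearly on $e$ and hence convex in $e$. A Bauer-type maximum principle on the induced convex, weak-$*$ compact constraint set will then yield a bang-bang optimiser in one stroke. The key analytic ingredient is a weak-$*$ continuity statement for this operator norm, which relies critically on the two-dimensional Sobolev embedding $W^{2,2}(\O)\hookrightarrow L^\infty(\O)$.

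First I would recast the eigenvalue problem. Let $G\colon L^2(\O)\to W^{2,2}(\O)\cap W^{1,2}_0(\O)$ be the solution operator for $-\Delta$ with homogeneous Dirichlet data. The Navier boundary conditions in \eqref{Eq:MuD} allow one to factor the eigenvalue equation by introducing the auxiliary variable $w:=D\Delta u_D$, which vanishes on $\partial\O$; a short computation gives
\[
 u_D \ =\ \mu(D)^{-1}\,G\!\left(D^{-1}\,Gu_D\right).
\]
Setting $e:=1/D$ and $T_e:=GM_eG\colon L^2(\O)\to L^2(\O)$, where $M_e$ denotes multiplication by $e$, the operator $T_e$ is compact, self-adjoint and positive, and
\[
 \frac{1}{\mu(D)}\ =\ \|T_e\|_{\mathrm{op}}\ =\ \sup_{\|f\|_{L^2}=1}\int_\O e\,(Gf)^2.
\]
As a supremum of weak-$*$ continuous linear functionals of $e$, the functional $e\mapsto 1/\mu(1/e)$ is convex.

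Second I would establish the weak-$*$ continuity of $e\mapsto T_e$ in operator norm. Given $e_n\rightharpoonup^* e^*$ in $L^\infty(\O)$ and any $f\in L^2(\O)$, the 2D Sobolev embedding $W^{2,2}\hookrightarrow L^\infty$ yields $Gf\in L^\infty(\O)$, so $e_nGf\rightharpoonup e^*Gf$ weakly in $L^2(\O)$, and the compactness of $G\colon L^2(\O)\to L^2(\O)$ then forces $T_{e_n}f\to T_{e^*}f$ strongly in $L^2(\O)$. To upgrade this pointwise convergence to operator-norm convergence, I would use that the set $K:=\{Gf\ :\ \|f\|_{L^2}\le 1\}$ is precompact in $L^\infty(\O)$ (via the compact embedding $W^{2,2}\hookrightarrow\hookrightarrow C^0$, valid in dimension two); an $\varepsilon$-net argument on $K$, combined with the uniform $L^\infty$-bound on $\{e_n-e^*\}$, yields $\|T_{e_n}-T_{e^*}\|_{\mathrm{op}}\to 0$, and in particular $1/\mu(1/e_n)\to 1/\mu(1/e^*)$.

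Third I would carry out the optimisation in $e$-variables. Setting $c:=(D_0-|\O|)/\beta_0$, the set $\mathcal N_{bb}(\O)=\{1+\beta_0\mathds 1_\omega\ :\ |\omega|=c\}$ of bang-bang thicknesses is precisely the set of extreme points of $\mathcal N(\O)$, and the weak-$*$ closure in $L^\infty(\O)$ of $\{D^{-1}\ :\ D\in\mathcal N_{bb}(\O)\}$ is the convex, weak-$*$ compact set
\[
 \mathcal E(\O)\ :=\ \left\{e\in L^\infty(\O)\ :\ \tfrac{1}{1+\beta_0}\le e\le 1,\ \int_\O e=|\O|-\tfrac{\beta_0 c}{1+\beta_0}\right\},
\]
whose extreme points are exactly the bang-bang $e$'s corresponding to $\mathcal N_{bb}(\O)$ under inversion. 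Since $e\mapsto\|T_e\|_{\mathrm{op}}$ is convex and weak-$*$ continuous on $\mathcal E(\O)$, the Bauer maximum principle guarantees that its supremum is attained at an extreme point $e^*$, providing $D^*:=1/e^*=1+\beta_0\mathds 1_{\omega^*}\in\mathcal N_{bb}(\O)\subset\mathcal N(\O)$. The final identification $\mu(D^*)=\inf_{\mathcal N(\O)}\mu$ follows from the concavity of $D\mapsto\mu(D)$ on $\mathcal N(\O)$ (again as an infimum of linear functionals of $D$), combined with a Choquet/Krein--Milman-type representation on the rearrangement class $\mathcal N_{bb}(\O)$, in the spirit of the results recalled in \cite{Alvino1989,ConcaMahadevanSanz}.

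The hard step is the promotion of strong-operator to operator-norm convergence of $T_{e_n}$: without it, $e\mapsto\|T_e\|_{\mathrm{op}}$ is merely weak-$*$ lower semicontinuous (as the pointwise supremum of continuous linear functionals), Bauer's principle does not apply, and the supremum may well fail to be attained, producing the usual H-convergence gap familiar from two-phase elliptic problems. The upgrade rests entirely on the precompactness of $\{Gf\}$ in $L^\infty(\O)$, and therefore on the fact that $\O\subset\R^2$.
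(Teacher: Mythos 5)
Your proposal is correct and follows the same overall architecture as the paper --- pass to the reciprocal variable $e=1/D$, relax to the convex, weak-$*$ compact class $\mathcal E(\O)$ (which is exactly the Alvino--Lions--Trombetti class $\mathscr K\bigl(\bigl(\tfrac1{\overline D^\#}\bigr)^\#\bigr)$ used in the paper), establish weak-$*$ continuity of the eigenvalue in $e$, and locate the optimiser at an extreme point --- but the two technical pillars are proved by genuinely different means. For the continuity step, the paper's Lemma \ref{Cl:CvInv} splits the fourth-order eigenequation into the second-order system for $(u_k,z_k)$ with $z_k=-D_k\Delta u_k$ and passes to the limit in the weak formulations, identifying the limit via the sign of the eigenfunction; you instead prove operator-norm convergence of $T_e=GM_eG$ from the compactness of $K=\{Gf:\|f\|_{L^2}\le1\}$ in $C^0(\overline\O)$ (hence of $\{(Gf)^2\}$ in $L^1$) together with the uniform $L^\infty$ bound, which is a cleaner, strictly stronger conclusion; note that both routes use the same structural fact, namely that the weak-$*$ convergent coefficient is sandwiched between two applications of the compact operator $G$. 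For the bang-bang step, the paper differentiates $\eta$ at an interior point of $\mathscr K$, shows the switch function $(\Delta u_E)^2/E^2$ would be constant on $\{\frac1{1+\beta_0}<E<1\}$, and derives the contradiction $u_E=0$ there; your observation that $1/\mu(1/e)=\sup_{\|f\|_{L^2}=1}\int_\O e\,(Gf)^2$ is \emph{convex} in $e$ lets Bauer's maximum principle deliver an extremal maximiser directly, bypassing differentiability of the eigenvalue and the unique-continuation-type argument entirely (at the price of only producing \emph{some} bang-bang optimiser, whereas the paper shows \emph{every} relaxed optimiser is bang-bang --- which is all the theorem requires anyway). Two small points: your displayed fixed-point identity should read $u_D=\mu(D)\,G\bigl(D^{-1}Gu_D\bigr)$ rather than $\mu(D)^{-1}G(D^{-1}Gu_D)$ (the identity $1/\mu(D)=\|T_e\|_{\mathrm{op}}$ you actually use is correct, e.g.\ via $\|GM_{\sqrt e}\|^2=\|GM_eG\|$), and the boundedness of $G:L^2(\O)\to W^{2,2}(\O)$ rests on $H^2$ elliptic regularity, which is where the $\mathscr C^2$ assumption on $\partial\O$ enters.
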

The proof of this theorem relies on rather fine topological arguments which  yield  compactness of sequences of minimisers. Let us note that, as is classical in  this class of  problems, one can not use the direct method in the calculus of variations: indeed, the best convergence one could get on a minimising sequence $\{D_k\}_{k\in \N}$ (and on the associated sequence of eigenfunctions $\{u_k\}_{k\in \N}$) is  the weak-$\ast$  convergence in $L^\infty $ of  $\{D_k\}_{k\in \N}$ and weak convergence of $\{u_k\}_{k\in \N}$ in $W^{2,2}(\O)$, thus forbidding to pass to the limit in the Rayleigh-quotient formulation \eqref{Eq:DefMu} of $\{\mu(D_k)\}_{k\in \N}$. This is  a known \UUU conundrum \EEE  in the study of two-phases operators \cite{MuratTartar},  \UUU impairing the proof of \EEE the existence of optimisers  for general $\O$.   We present here  a way to circumvent this difficulty in the case of biharmonic operators.

In general domains, it is hopeless to give an explicit characterisation of the optimal thickness $D^*$. In the case of a ball, however, using Talenti inequalities, we obtain  an inequality of Faber-Krahn type.  Consider the case in which our plate \UUU coincides \EEE with the ball of radius $R>0$ centered in the origin, i.e.  $\O=\mathbb B(0;R)$. Define the  function  ${\overline D}_\#$ as follows: 
\begin{equation}\label{Eq:DefA}
  \overline D_\#=(1+\beta_0)\mathds 1_{\mathbb A}+\mathds 1_{\mathbb A^c}
\end{equation}
where, in radial coordinates, $\UUU {\mathbb A }\EEE =\{r_0<r<R\}$  and  $ \operatorname{Vol}(\mathbb A)=({D_0-\operatorname{Vol}(\O)})/{\beta_0}$.  Note that the set 
$\mathbb A$ is uniquely defined  and the volume constraint ensures that $\overline D_\#\in \mathcal N(\O)$.  Our second result reads as follows. 

\begin{theorem}[The case of the ball]\label{Th:PV0}
Let $\O=\mathbb B(0;R)$  for some $R>0$. Then,  $\overline D_\#$ minimises $\mu$ in $\mathcal N(\O)$,  namely, 
\begin{equation}   \mu({\overline D}_\#) \leq \mu(D)  \quad \forall D \in \mathcal N(\O).\end{equation}\end{theorem}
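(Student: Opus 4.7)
My plan is to dualise the minimisation of $\mu$ and interchange two suprema. Parametrising test functions by $f := -\Delta u$, the map $u \leftrightarrow f$ is a bijection between $W^{2,2}(\Omega) \cap W^{1,2}_0(\Omega)$ and $L^2(\Omega)$, with inverse $u = \mathcal{S} f$, where $\mathcal{S}$ denotes the Green operator of $-\Delta$ with Dirichlet condition on $\partial\Omega$ with $\Omega = \mathbb{B}(0;R)$. Therefore the Rayleigh quotient \eqref{Eq:DefMu} becomes $1/\mu(D) = \sup_{f \not\equiv 0} \int_\Omega (\mathcal{S} f)^2/\int_\Omega D f^2$. Since the numerator is independent of $D$, commuting the outer supremum over $D \in \mathcal{N}(\Omega)$ with the inner supremum over $f$ yields
\[
\sup_{D \in \mathcal{N}(\Omega)} \frac{1}{\mu(D)}
= \sup_{f \not\equiv 0} \frac{\int_\Omega (\mathcal{S} f)^2}{\displaystyle\inf_{D \in \mathcal N(\Omega)} \int_\Omega D f^2},
\]
and moreover the outer supremum may be restricted to $f \geq 0$ thanks to the pointwise inequality $|\mathcal{S} f| \leq \mathcal{S}|f|$.

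The inner infimum is a classical bathtub problem. Since $D \mapsto \int_\Omega D f^2$ is affine on the convex set $\mathcal N(\Omega)$, its minimum is attained at the bang-bang profile $D_f = 1 + \beta_0 \mathds{1}_{\{f^2 \leq t_f\}}$, with threshold $t_f \geq 0$ chosen so that $|\{f^2 \leq t_f\}| = (D_0 - \operatorname{Vol}(\Omega))/\beta_0 = \operatorname{Vol}(\mathbb A)$, giving
\[
\inf_{D \in \mathcal N(\Omega)} \int_\Omega D f^2 = \int_\Omega f^2 + \beta_0 \int_{\{f^2 \leq t_f\}} f^2 =: M(f).
\]
Crucially, $M(f)$ depends on $f$ only through the distribution function of $f^2$ and is therefore invariant under Schwarz symmetric decreasing rearrangement, i.e.\ $M(f) = M(f^\#)$. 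On the other hand, Talenti's pointwise comparison applied to $-\Delta(\mathcal{S} f) = f$ on the ball gives, for $f \geq 0$, $(\mathcal{S} f)^\# \leq \mathcal{S}(f^\#)$ and hence $\int_\Omega (\mathcal S f)^2 \leq \int_\Omega (\mathcal S(f^\#))^2$. Combining these two facts, the outer supremum may be restricted to non-negative, radially non-increasing $f$; for such a profile, $\{f^2 \leq t_f\}$ coincides exactly with the outer annulus $\mathbb A$ and consequently $D_f = \overline D_\#$.

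Putting everything together,
\[
\sup_{D \in \mathcal N(\Omega)} \frac{1}{\mu(D)}
= \sup_{f \geq 0,\,\text{radial dec.}} \frac{\int_\Omega (\mathcal{S} f)^2}{\int_\Omega \overline D_\# f^2}
\leq \frac{1}{\mu(\overline{D}_\#)},
\]
and the reverse inequality is trivial by testing with $D = \overline D_\#$ itself; alternatively it follows by plugging in $f_\ast := -\Delta u_{\overline D_\#}$, which is non-negative and radially non-increasing by the radial symmetry of $\overline{D}_\#$ together with the iterated maximum principle applied to the factorisation $-\Delta v_\ast = \mu(\overline D_\#) u_{\overline D_\#}$, $-\Delta u_{\overline D_\#} = v_\ast/\overline{D}_\#$. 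Inverting, this proves $\mu(\overline{D}_\#) \leq \mu(D)$ for every $D \in \mathcal N(\Omega)$.

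The main delicate point is the compatibility of Talenti's reduction with the bathtub identification: one must verify that for a radially non-increasing $f$, the sub-level set $\{f^2 \leq t_f\}$ is precisely the prescribed outer annulus $\mathbb A$ of volume $\operatorname{Vol}(\mathbb A)$, so that the bathtub optimiser $D_{f^\#}$ coincides exactly with the target $\overline D_\#$. Possible plateaus of $f^2$ at the threshold $t_f$ can be handled by a standard refinement of the bathtub formula without altering the value of $M(f)$, and they do not occur for a strictly radially decreasing profile.
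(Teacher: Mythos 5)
Your argument is correct and rests on the same two pillars as the paper's own proof: the substitution $f=-\Delta u$ combined with Talenti's comparison principle on the ball, and the bathtub principle to identify the worst-case thickness after rearrangement. The only difference is presentational — you dualise and interchange the suprema over $D$ and $f$ (which incidentally lets you replace the positivity lemma for $-\Delta u_D$ by the elementary bound $|\mathcal S f|\leq \mathcal S|f|$), whereas the paper fixes $D$, rearranges $z_D=-\Delta u_D$, and exhibits the solution $\tilde u_D$ of $-\Delta \tilde u_D=z_D^\#$ as a competitor for $\mu(\overline D_\#)$; the underlying estimates are identical.
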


It should be noted that this is the exact opposite  result with respect to  the optimisation  on  the density $\rho$ (i.e. keeping $D\equiv 1$).  In fact, by minimizing w.r.t. $\rho$  it is shown \UUU in \EEE \cite{Anedda2010,Anedda} that the unique optimal material density $\rho^*$ when $\O=\mathbb B(0;R)$ corresponds to having a maximal density in the center, and a minimal density close to the boundary: $\rho^*=\mathds 1_{\mathbb B(0;r^*)}$ with $r^*$ chosen so as to satisfy the volume constraint.  This \UUU observation  motivates \EEE our interest in investigating optimality with respect to density {\it and} thickness. We tackle this topic in the next subsection, by assuming a linear relation between $\rho$ and $D$.  

\subsection{Density-dependent thickness}
In this subsection, we consider another version of \eqref{Eq:PvIntro}-\eqref{Eq:PvIntro2}, by assuming a linear dependency of the thickness $D$ of the plate with respect to the density \UUU of the material. \EEE In other words, we consider a real parameter $\alpha\geq 0$, and assume that the thickness $D$ depends on the density \UUU of the material \EEE via the relation
\begin{equation}
D=1+\alpha \rho.\end{equation}
Keeping in mind that $\rho$ corresponds to the repartition of some material inside the elastic plate $\O$, we  recall the  admissible class $\mathcal M(\Omega)$ of densities from \eqref{eq:M} 
and, for any $\rho \in \mathcal M(\O)$, we consider the first eigenvalue $\lambda_\alpha(\rho)$ of \UUU $u \mapsto \Delta\Big((1+\alpha \rho)\Delta u\Big)-\rho u$. In its Rayleigh-quotient formulation, this is given\EEE by 
\begin{equation}\label{Eq:DefLambda}\lambda_\alpha(\rho):=\inf_{u\in W^{2,2}(\O)\cap W^{1,2}_0(\O),\, u\neq 0}\frac{\int_\O (1+\alpha \rho)(\Delta u)^2-\int_\O \rho u^2}{\int_\O u^2}.\end{equation} Up to a $L^2$ normalisation, the associated eigenfunction $\uar$ satisfies
\begin{equation}\label{Eq:LambdaRho}
\begin{cases}
\Delta \left((1+\alpha \rho)\Delta \uar\right)=\lambda_\alpha(\rho)\uar+\rho\uar\text{ in }\O, 
\\ \uar=\Delta \uar=0\text{ on }\partial \O, 
\\ \int_\O \uar^2=1.
\end{cases}
\end{equation}
We prove in Lemma \ref{Cl:Misc} that $\lambda_\alpha(\rho)$ is a simple eigenvalue and that the associated first eigenfunction has a constant sign.

 For a fixed   parameter $\alpha\geq 0$,  we consider  the optimisation problem
\begin{equation}\inf_{\rho \in \mathcal M(\O)}\lambda_\alpha(\rho).\end{equation} We assume that $\O=\mathbb B(0;R)$  for some $R>0$ \UUU and \EEE focus on the \UUU geometry of minimizers for \EEE $\alpha>0$ \UUU small. \EEE Indeed, an explicit characterisation of the minimisers for $\alpha=0$ was given in \cite{Anedda}: if $\B^*$ is the unique  ball centered in the origin, contained in $\O = \B(0;R)$ \UUU with \EEE $\operatorname{Vol}(\B^*)=\rho_0$, then the unique minimiser of $\lambda_0$ in $\mathcal M(\O)$ is 
\begin{equation}\rho^*=\mathds 1_{\B^*}.\end{equation} \UUU On the other hand, \EEE Theorem \ref{Th:PV0} seems to indicate that, for $\alpha\to \infty$, the optimal $\rho$ should behave as $\mathds 1_{\mathbb A}$, where $\mathbb A=\{r_0<r<R\}$ is the only  annulus  of volume $\rho_0$. 

\begin{theorem}[Stability for small $\alpha$ in the ball for radially symmetric distributions]\label{Th:Stability} Let $\O=\mathbb B(0;R)$  for some $R>0$,  and define $\rho^*:=\mathds 1_{\B^*}$.  Then, there  exists $\overline \alpha>0$ such that, for any $0\leq \alpha\leq \overline \alpha$, 
\begin{equation}
\lambda_\alpha(\rho^*)\leq \lambda_\alpha(\rho )  \quad \forall \rho \in \mathcal M(\B), \ \rho \text{ radially symmetric}. 
\end{equation}
\end{theorem}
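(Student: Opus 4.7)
My plan is a contradiction argument, adapting the $H$-convergence stability scheme of \cite{MazariNadinPrivat} to the biharmonic setting. Suppose that no such $\overline \alpha$ exists, so that one can extract sequences $\alpha_n \to 0^+$ and radially symmetric $\rho_n \in \mathcal M(\B)$ with $\lambda_{\alpha_n}(\rho_n) < \lambda_{\alpha_n}(\rho^*)$. By the weak-$\ast$ compactness of $\mathcal M(\B)$ and the topological arguments used in the proof of Theorem \ref{Th:ExistMu}, I may further replace $\rho_n$ by a minimiser of $\lambda_{\alpha_n}$ over the radial subclass of $\mathcal M(\B)$. Setting $u_n := u_{\alpha_n,\rho_n}$ and $\psi_n := u_n^2 - \alpha_n(\Delta u_n)^2$, the first step is to derive first-order optimality: differentiating the Rayleigh quotient \eqref{Eq:DefLambda} along admissible radial perturbations $\rho_n + t h$ (with $\int h = 0$, $h \geq 0$ on $\{\rho_n = 0\}$, $h \leq 0$ on $\{\rho_n = 1\}$) yields a Lagrange multiplier $c_n \in \mathbb R$ such that $\rho_n = \mathds 1_{\{\psi_n > c_n\}}$ almost everywhere off the critical set $\{\psi_n = c_n\}$.

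The second step is a limit passage as $n \to \infty$. Since $\|\alpha_n\rho_n\|_{L^\infty(\B)} \leq \alpha_n \to 0$, the coefficient $1 + \alpha_n\rho_n$ tends strongly in $L^\infty(\B)$ to the constant $1$, which makes the $H$-convergence of the fourth-order operators $u \mapsto \Delta\bigl((1+\alpha_n\rho_n)\Delta u\bigr)$ immediate. Along a subsequence, $\rho_n \rightharpoonup^\ast \rho_\infty$ weakly-$\ast$ in $L^\infty$, $u_n \to u_\infty$ strongly in $W^{2,2}(\B)$, and $\lambda_{\alpha_n}(\rho_n) \to \lambda_0(\rho_\infty)$. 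Combined with $\lambda_{\alpha_n}(\rho^*) \to \lambda_0(\rho^*)$ and the uniqueness result of \cite{Anedda}, this forces $\rho_\infty = \rho^*$ and $u_\infty = u^* := u_{0,\rho^*}$. To upgrade the convergence of $\psi_n$, I would introduce the auxiliary variable $v_n := (1+\alpha_n\rho_n)\Delta u_n$, which satisfies $\Delta v_n = -(\lambda_{\alpha_n}(\rho_n) + \rho_n) u_n$ in $\B$ with $v_n = 0$ on $\partial\B$ and uniformly bounded $L^\infty$ right-hand side. Elliptic regularity then gives $v_n \to \Delta u^*$ in $C^{1,\gamma}(\overline{\B})$, and dividing by $1+\alpha_n\rho_n \to 1$ in $L^\infty$ yields $\Delta u_n \to \Delta u^*$ in $L^\infty(\B)$. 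Hence $\psi_n \to (u^*)^2$ uniformly on $\B$ and $c_n \to c^* := (u^*)^2(r^*)$.

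The contradiction is then structural. The first eigenfunction $u^*$ is radial and strictly decreasing on $[0, R)$ by the Talenti-type arguments of \cite{Anedda}, so the super-level sets of $(u^*)^2$ are open concentric balls. Since $\psi_n \to (u^*)^2$ uniformly with $c_n \to c^*$, the radial sets $\{\psi_n > c_n\}$ are, for every $\eta > 0$ and all sufficiently large $n$, sandwiched between $\B(0, r^* - \eta)$ and $\B(0, r^* + \eta)$; combined with the volume constraint $\Vol(\{\rho_n = 1\}) = \rho_0 = \Vol(\B^*)$ and the radial symmetry of $\rho_n$, this identifies $\rho_n = \mathds 1_{\B^*} = \rho^*$ for $n$ large, contradicting $\lambda_{\alpha_n}(\rho_n) < \lambda_{\alpha_n}(\rho^*)$. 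The hard part is precisely this final identification: bootstrapping elliptic regularity on an auxiliary problem whose coefficient is only in $L^\infty$, exploiting the strict radial monotonicity of $u^*$ despite the non-continuity of $\Delta u_n$, and handling the degeneracy of $(u^*)^2$ near $\partial\B$ must all be intertwined, and it is to control this combination that the biharmonic analog of the $H$-convergence machinery of \cite{MazariNadinPrivat} is indispensable.
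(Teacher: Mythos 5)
Your overall strategy (contradiction, compactness as $\alpha_n\to 0$, a switch function, and the strict radial monotonicity of $u_{0,\rho^*}$) matches the paper's, but two steps do not go through as written. First, you replace $\rho_n$ by a minimiser of $\lambda_{\alpha_n}$ over the radial subclass in order to invoke first-order optimality conditions. The existence of such a minimiser is not known: $\lambda_{\alpha}$ is concave, hence only weakly-$\ast$ \emph{upper} semicontinuous, and the machinery of Theorem \ref{Th:ExistMu} is built for $\mu(D)$, where the unknown sits only in the leading coefficient; it is not available for $\lambda_\alpha(\rho)$, where $\rho$ enters both the principal part and the potential. The paper avoids this entirely: it takes an arbitrary competitor $\rho_\alpha$ with $\lambda_\alpha(\rho_\alpha)\le\lambda_\alpha(\rho^*)$, upgrades it to a bang-bang function by concavity (Lemma \ref{Cl:Convergence}), and then applies the mean value theorem to $t\mapsto \Lambda_\alpha(\rho_\alpha+t(\rho^*-\rho_\alpha))$ to obtain an \emph{exact} representation of $\Lambda_\alpha(\rho^*)-\Lambda_\alpha(\rho_\alpha)$ as $\int\Psi_{t(\alpha)}(\rho^*-\rho_\alpha)$, so no optimality condition at $\rho_\alpha$ is ever needed.

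Second, and more seriously, the final identification $\rho_n=\rho^*$ does not follow from the uniform convergence $\psi_n\to (u^*)^2$. Your switch function $\psi_n=u_n^2-\alpha_n(\Delta u_n)^2$ contains $\Delta u_n=v_n/(1+\alpha_n\rho_n)$, which jumps across $\partial\{\rho_n=1\}$; thus $\psi_n$ is merely $L^\infty$, not continuous, and certainly not monotone in $r$. Uniform closeness to the strictly decreasing $(u^*)^2$ only sandwiches $\{\psi_n>c_n\}$ between two balls of nearly equal radii; together with the volume constraint this yields $\rho_n\to\rho^*$ in $L^1$ (which you already know from the compactness step), but it does not exclude that $\{\rho_n=1\}$ differs from $\B^*$ on a thin annulus around $r=r^*$ --- exactly the kind of competitor the theorem must rule out. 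To force the super-level set to be \emph{exactly} a ball one needs the switch function to be strictly radially monotone, i.e.\ $\mathscr C^1$ convergence, and this is precisely what fails for $\psi_n$ and what the paper's harmonic-mean reformulation repairs: replacing $1+\alpha\rho$ by $\mathscr J_-(\rho)=(1+\alpha)/(1+\alpha(1-\rho))$ (which coincides with $1+\alpha\rho$ on bang-bang densities) produces a switch function built from $\mathscr J_-(\rho_t)\Delta v_t$, which solves a Poisson equation with bounded right-hand side and is therefore $\mathscr C^{1,\gamma}$; the resulting $\Psi_{t(\alpha)}$ converges to $-u_{0,\rho^*}^2$ in $\mathscr C^1$, so \eqref{Eq:De} makes $\B^*$ the unique level set of volume $\rho_0$ and the bathtub principle closes the argument. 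You correctly flagged the regularity of $\Delta u_n$ as the hard point, but dividing the $\mathscr C^{1,\gamma}$ function $v_n$ by the discontinuous coefficient destroys exactly the regularity your level-set argument needs.
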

The proof of this theorem relies on fine arguments inspired from $H$-convergence theory \cite{Allaire,MuratTartar}, and can be linked to some stationarity results in two-phases problems \cite{Laurain,MazariNadinPrivat}. In the proof, the radial symmetry assumption of competitors is crucial. 

\section{Preliminary technical results}\label{Se:Preliminary}
We first gather in this section several preliminary results that are used throughout the rest of the paper.

 Let us begin by presenting a straightforward  application of the maximum principle.
\begin{lemma}[Positivity principle]\label{Cl:Positivity}
Let $\rho\in \mathcal M(\O)$.
Assume that $u\in W^{2,2}_0(\O)$ satisfies, for some $f\in L^2(\O)$,
\begin{equation}\begin{cases}
\Delta \left((1+\alpha \rho)\Delta u\right)=f\geq 0\text{ in }\O, 
\\ u=\Delta u=0\text{ on }\partial \O.\end{cases}\end{equation}
Then 
\begin{equation}
u\geq 0 \ \text{and} \ \  (1+\alpha \rho)\Delta u\leq 0\text{ in }\O.\end{equation}

\end{lemma}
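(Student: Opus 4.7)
The plan is to decouple the fourth-order equation into two second-order Poisson-type problems and then apply the weak maximum principle to each. Introduce the auxiliary function
\begin{equation*}
v := -(1+\alpha \rho)\Delta u.
\end{equation*}
The assumption $\Delta\bigl((1+\alpha\rho)\Delta u\bigr)=f$ then rewrites as $-\Delta v = f \geq 0$ in $\Omega$, while the Navier condition $\Delta u = 0$ on $\partial \Omega$ translates into $v = 0$ on $\partial \Omega$ (understood via traces, using that $u\in W^{2,2}(\Omega)$ and $1+\alpha \rho \in L^\infty(\Omega)$).

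The first step is to prove that $v \geq 0$ in $\Omega$. This would be immediate from the classical weak maximum principle in $W^{1,2}_0(\Omega)$ if $v$ were itself $W^{1,2}$-regular, but a priori $v$ only lies in $L^2(\Omega)$ because $\rho$ is merely $L^\infty$. To circumvent this, I would introduce $\tilde v \in W^{1,2}_0(\Omega)$ as the unique variational solution of $-\Delta \tilde v = f$ in $\Omega$, which belongs in fact to $W^{2,2}(\Omega)$ by elliptic regularity and the $\mathscr{C}^2$ smoothness of $\partial\Omega$, and which satisfies $\tilde v \geq 0$ by the standard weak maximum principle applied to $\tilde v_-:=\min(\tilde v,0)\in W^{1,2}_0(\Omega)$. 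Then I would consider the difference $w:=v-\tilde v \in L^2(\Omega)$: it is harmonic in the distributional sense, hence smooth by Weyl's lemma, and the common boundary condition $v=\tilde v =0$ on $\partial\Omega$ (combined with uniqueness for the Dirichlet problem for the Laplacian in the appropriate trace class) forces $w\equiv 0$. Thus $v=\tilde v\geq 0$ a.e.\ in $\Omega$, which yields $(1+\alpha\rho)\Delta u\leq 0$.

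The second step is a direct application of the weak maximum principle to $u$: since $1+\alpha\rho\geq 1>0$, the first step gives $-\Delta u\geq 0$ in $\Omega$, and coupling this with the boundary condition $u=0$ on $\partial\Omega$ (recall $u\in W^{2,2}(\Omega)\cap W^{1,2}_0(\Omega)$) yields $u\geq 0$ a.e.\ in $\Omega$ by testing against $u_-\in W^{1,2}_0(\Omega)$.

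The main obstacle is the first step, more precisely the fact that the coefficient $1+\alpha\rho$ is only bounded and measurable, so that $v$ fails a priori to enjoy the $W^{1,2}$-regularity usually required for invoking the weak maximum principle. Once this technical point is settled via the comparison argument above, the remaining reasoning is a routine double application of the maximum principle.
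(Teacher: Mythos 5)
Your proof is correct and, at the crucial first step, is a slightly more explicit version of the paper's argument. The paper simply asserts that by elliptic regularity the function $z:=-(1+\alpha\rho)\Delta u$ lies in $W^{2,2}(\O)$, hence in $W^{1,2}_0(\O)$, and then applies the weak maximum principle; implicitly this identifies $z$ with the variational solution of the Poisson problem $-\Delta z = f$, $z\in W^{1,2}_0(\O)$, which is exactly the identification you carry out by hand. You flag the genuine subtlety (that $v=-(1+\alpha\rho)\Delta u$ is a priori only $L^2$, because $\rho$ is merely $L^\infty$, so the classical $W^{1,2}_0$ maximum principle does not apply directly) and resolve it by a comparison argument: introduce the smooth variational solution $\tilde v\geq 0$ and use uniqueness of the Dirichlet problem to identify $v=\tilde v$. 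This is the right idea, and buys a little more rigor than the paper's terse invocation of ``elliptic regularity.'' Two minor remarks: the appeal to Weyl's lemma is a red herring — interior smoothness of $w=v-\tilde v$ does not by itself give boundary information; what actually forces $w\equiv 0$ is uniqueness of the very weak (transposition) solution, i.e.\ the observation that $\int_\O w\,(-\Delta\phi)=0$ for all $\phi\in W^{2,2}(\O)\cap W^{1,2}_0(\O)$, and that $-\Delta$ maps this space onto $L^2(\O)$. Second, strictly speaking it is more natural to read the hypothesis as saying that $u$ solves the coupled second-order system, so that $v$ is \emph{defined} as the $W^{1,2}_0$ solution; either reading makes your argument go through. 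The second step (maximum principle for $u$) is identical to the paper's.
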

\begin{proof}[Proof of  Lemma \ref{Cl:Positivity}]
Let $\rho,u$ be as in the statement of the lemma. First of all, by elliptic regularity, 
$$(1+\alpha \rho)\Delta u\in \UUU W^{2,2}\EEE(\O).$$ Let us introduce  $z=-(1+\alpha \rho)\Delta u$.  
Then $z\in W^{1,2}_0(\O)$ satisfies
\begin{equation*}
\begin{cases}
-\Delta z=f\geq 0\text{ in }\O,
 \\z=0\text{ on }\partial \O.
 \end{cases}
 \end{equation*}
As a consequence of the maximum principle for the Laplacian we obtain 
$z\geq 0\text{ in }\O.$ Hence, 
$
\Delta u\leq 0.$
Since $u\in W^{2,2}(\O)$, $\Delta u \in L^2(\O)$. We can then apply the maximum principle to the inequality
$-\Delta u\geq 0\text{ in }\O$ to conclude that $u\geq 0$ in $\O$. 
\end{proof}

 In the next lemma we collect  some basic facts about the underlying spectral and optimisation problems.
\begin{lemma}\label{Cl:Misc}
\begin{enumerate}
\item  For any $D\in \mathcal N(\O)$, $ \alpha\geq 0$, and $ \rho\in \mathcal M(\O)$, the eigenfunctions $u_D$ and $\uar$ can be assumed to have constant sign. Hence, the first eigenvalue is the only one  whose eigenfunction is constant in sign.   
\item There exists $M>0$ such that, for any $D\in \mathcal N(\O)$,
\begin{equation}\left|�\mu(D)\right|,\, \Vert u_D\Vert_{W^{2,2}(\O)}\leq M.\end{equation}
\item For any $\overline \alpha>0$, there exists $M(\overline \alpha)$ such that, for any $\rho \in \mathcal M(\O)$ and any $\alpha \in [0;\overline \alpha]$, 
\begin{equation}\left|\lambda_\alpha(\rho)\right|,\, \Vert \uar\Vert_{W^{2,2}(\O)}\leq M(\overline \alpha).\end{equation}
\end{enumerate}
\end{lemma}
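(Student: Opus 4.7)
The plan is to prove the three assertions by reformulating the fourth-order spectral problems as fixed-point equations for compact, self-adjoint, positivity-preserving operators on $L^2(\Omega)$, and then combining Krein--Rutman theory, Lemma \ref{Cl:Positivity}, and elliptic regularity.

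For Assertion (1), I would begin with the $\mu(D)$ problem. Introduce $T_D : L^2(\Omega) \to L^2(\Omega)$ with $T_D f := u$, where $u \in W^{2,2}(\Omega) \cap W^{1,2}_0(\Omega)$ is the unique solution -- obtained by Lax--Milgram on the coercive symmetric bilinear form $(u,v) \mapsto \int_\Omega D\, \Delta u\, \Delta v$ -- of the Navier problem $\Delta(D\Delta u) = f$, $u = \Delta u = 0$ on $\partial\Omega$. The operator $T_D$ is compact (elliptic regularity and Rellich), self-adjoint (symmetry of the form), and positivity-preserving (Lemma \ref{Cl:Positivity}). The eigenproblem \eqref{Eq:MuD} rewrites as $u_D = \mu(D)\, T_D u_D$, so Krein--Rutman yields that $1/\mu(D)$ is the simple largest eigenvalue of $T_D$, with a strictly positive eigenfunction, and is the unique eigenvalue of $T_D$ admitting a sign-constant eigenfunction. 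For $\lambda_\alpha(\rho)$, \eqref{Eq:LambdaRho} recasts as the generalised problem $\uar = T_{\alpha,\rho}\bigl((\lambda_\alpha(\rho) + \rho)\uar\bigr)$, with $T_{\alpha,\rho}$ the analogous operator associated with the coefficient $1+\alpha\rho$. For any $\lambda$ large enough that $\lambda + \rho > 0$ a.e., the map $u \mapsto T_{\alpha,\rho}((\lambda+\rho)u)$ is compact and positive, so Krein--Rutman produces a simple largest eigenvalue $\kappa(\lambda) > 0$ with a strictly positive eigenfunction; a continuation argument in $\lambda$ (based on continuity and strict monotonicity of $\kappa(\cdot)$) then singles out the unique $\lambda$ for which $\kappa(\lambda) = 1$, showing that it coincides with $\lambda_\alpha(\rho)$ and exhibiting a sign-constant first eigenfunction.

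For Assertion (2), nonnegativity $\mu(D) \geq 0$ is immediate from the Rayleigh quotient. Testing \eqref{Eq:DefMu} against the first Navier eigenfunction $\phi_1$ of $\Delta^2$ normalised by $\int_\Omega \phi_1^2 = 1$, and using $D \leq 1+\beta_0$, yields $\mu(D) \leq (1+\beta_0)\,\mu(1) =: M_1$. From \eqref{Eq:MuD} and $D \geq 1$, one obtains $\|\Delta u_D\|_{L^2(\Omega)}^2 \leq \int_\Omega D(\Delta u_D)^2 = \mu(D) \leq M_1$, and Calderón--Zygmund regularity for the Dirichlet Laplacian applied to $u_D$ with right-hand side $-\Delta u_D \in L^2(\Omega)$ gives $\|u_D\|_{W^{2,2}(\Omega)} \leq C\|\Delta u_D\|_{L^2(\Omega)}$, closing the bound. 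Assertion (3) is proved analogously: since $0 \leq \rho \leq 1$, one has $\int_\Omega \rho u^2 \in [0,1]$ on unit-$L^2$-norm competitors, whence $\lambda_\alpha(\rho) \geq -1$, and testing \eqref{Eq:DefLambda} against $\phi_1$ gives $\lambda_\alpha(\rho) \leq (1+\overline\alpha)\mu(1)$; the $W^{2,2}$ bound then follows from $\|\Delta\uar\|_{L^2(\Omega)}^2 \leq \int_\Omega (1+\alpha\rho)(\Delta\uar)^2 = \lambda_\alpha(\rho) + \int_\Omega \rho\,\uar^2$ combined with the same elliptic regularity estimate.

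The main subtlety lies in Assertion (1) for $\lambda_\alpha(\rho)$: the reaction term $-\rho u$ has a sign that is indefinite relative to the Rayleigh quotient, and -- as is typical for fourth-order problems -- the usual $|u|$-truncation trick fails because $|u| \notin W^{2,2}(\Omega)$ in general. Sign-constancy must instead be extracted through the generalised-eigenvalue/continuation argument sketched above, which crucially exploits the compatibility of the Navier boundary conditions with the strong maximum principle for $-\Delta$ encoded by Lemma \ref{Cl:Positivity}.
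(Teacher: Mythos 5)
Your treatment of parts 2 and 3 coincides with the paper's: nonnegativity of $\mu(D)$ (and the bound $\lambda_\alpha(\rho)\geq-1$) from the Rayleigh quotient, comparison with the first Navier eigenvalue $\eta_1(\O)$ of $\Delta^2$ using $D\leq 1+\beta_0$, the energy identity $\int_\O D(\Delta u_D)^2=\mu(D)$ obtained by testing the eigenequation, and closing with the elliptic estimate $\Vert u_D\Vert_{W^{2,2}(\O)}\leq C\Vert\Delta u_D\Vert_{L^2(\O)}$.

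For part 1 you take a genuinely different route. The paper adapts a construction from \cite{Berchio2006}: given any admissible $u$, it solves $-\Delta w=|\Delta u|$ in $\O$, $w=0$ on $\partial\O$, and checks that $\int_\O(1+\alpha\rho)(\Delta w)^2=\int_\O(1+\alpha\rho)(\Delta u)^2$, that $w\geq|u|$ (whence $\int_\O w^2\geq\int_\O u^2$ and $\int_\O\rho w^2\geq\int_\O\rho u^2$), producing an explicit nonnegative competitor with a no-larger numerator and a no-smaller denominator, all in one step and without any spectral machinery. Your operator-theoretic reformulation via $T_D=(-\Delta)^{-1}\circ M_{1/D}\circ(-\Delta)^{-1}$ is a legitimate alternative for the $\mu(D)$ problem, but you should flag a technical point: the positive cone of $L^2(\O)$ has empty interior, so the strongly-positive form of Krein--Rutman does not apply verbatim in $L^2$; one has to invoke the Banach-lattice/irreducibility version (de~Pagter), with irreducibility of $T_D$ coming from two applications of the strong maximum principle exactly as in Lemma~\ref{Cl:Positivity}. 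The paper's variational construction sidesteps this and still yields simplicity.

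The real gap is in the continuation argument for $\lambda_\alpha(\rho)$. The class $\mathcal M(\O)$ contains bang-bang densities $\rho=\mathds 1_{\omega'}$ vanishing on sets of positive measure, so the requirement $\lambda+\rho>0$ a.e.\ forces $\lambda>0$. The operator $K_\lambda:u\mapsto T_{\alpha,\rho}\bigl((\lambda+\rho)u\bigr)$ is therefore positivity-preserving only on that half-line, and your monotone continuation of $\kappa(\lambda)=r(K_\lambda)$ can only locate roots of $\kappa(\lambda)=1$ with $\lambda>0$. Nothing in the hypotheses excludes $\lambda_\alpha(\rho)\leq 0$ (the only a priori lower bound is $\lambda_\alpha(\rho)\geq-1$, and $\lambda_\alpha(\rho)$ becomes negative once $\eta_1(\O)$ is small compared with $\sup\rho=1$); in that regime the continuation terminates at $\lambda=0^+$ with $\kappa(0^+)\geq 1$ and never reaches the first eigenvalue. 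One cannot repair this by shifting the operator by a constant: Lemma~\ref{Cl:Positivity} relies entirely on the factorisation $\Delta\bigl((1+\alpha\rho)\Delta\cdot\bigr)=(-\Delta)\circ M_{1+\alpha\rho}\circ(-\Delta)$, and the Navier biharmonic with an added positive zeroth-order potential is no longer positivity-preserving, so the shifted resolvent is not an order-preserving operator to which Krein--Rutman applies. The paper's $w$-construction operates directly on the Rayleigh quotient and never forms a resolvent of the operator including the zeroth-order term, which is precisely why it avoids this obstruction; you should either restrict to the regime $\lambda_\alpha(\rho)>0$, or replace the continuation argument by the paper's direct comparison.
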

\begin{proof}[Proof of Lemma \ref{Cl:Misc}]
To prove \UUU point 1 of the \EEE Lemma, we adapt \cite[Lemma 16]{Berchio2006}. We \UUU detail this argument for  $\lambda_\alpha(\rho)$ only, for an analogous proof yields \EEE the conclusion for $\mu(D)$, \UUU as well. \EEE  In order  to prove \UUU point 1, \EEE  it suffices  to establish the following fact: for any $u\in W^{2,2}(\O)\cap W^{1,2}_0(\O)$ and any $\rho \in \mathcal M(\O)$ there exists $w\in W^{2,2}(\O)\cap W^{1,2}(\O)$ such that
\begin{equation}w\geq 0, \   \int_\O (1+\alpha \rho)(\Delta w)^2-\rho w^2\leq  \int_\O (1+\alpha \rho)(\Delta u)^2-\rho u^2, \ \text{ and} \ \int_\O w^2\geq \int_\O u^2. \end{equation} Indeed, since $u\in W^{2,2}(\O)$ does not imply $|u|\in W^{2,2}(\O)$, it is not possible to simply replace $u$  by  its absolute value. Let us hence consider $\rho\in \mathcal M(\O),\, \alpha\geq 0,\, u\in W^{2,2}(\O)\cap W^{1,2}_0(\O)$ and define $w$ as the unique solution of 
\begin{equation}\begin{cases}
-\Delta w=\left|�\Delta u\right|\text{ in }\O, 
\\ w=0\text{ on }\partial \O.\end{cases}
\end{equation}
\UUU We \EEE first observe that 
$\int_\O (1+\alpha \rho)(\Delta w)^2=\int_\O (1+\alpha \rho)(\Delta u)^2.$
Besides, by the maximum principle, 
$w\geq 0\text{ in }\O.$ Furthermore, from the definition of $w$, we get that 
$-\Delta w\geq -\Delta u,\, -\Delta w\geq \Delta u$, whence $w\geq u$, and $w\geq -u$ in $\O$. As a consequence, $w\geq |u|$ in $\O$. Thus, $\int_\O w^2\geq \int_\O u^2.$
Since $\rho \geq 0$, \UUU we have that \EEE
$$\int_\O \rho u^2\leq \int_\O \rho w^2$$ which yields the conclusion. It should be noted that this construction proves that any eigenfunction associated with the first eigenvalue has a constant sign, whence the simplicity of the first eigenvalues $\mu(D)$ and $\lambda_\alpha(\rho)$.

We \UUU now proceed with the proof of point 2. Point 3 follows \EEE from the exact same arguments. Let us consider $D\in \mathcal N(\O)$. From the Rayleigh-quotient formulation \eqref{Eq:DefMu} of $\mu(D)$, we get that $\mu(D)\geq 0$ (for $\lambda_\alpha(\rho)$, we would get $\lambda_\alpha(\rho)\geq -1$). Let us consider the first eigenvalue $\eta_1(\O)$ of the  biharmonic operator in $\O$ defined as 
\begin{equation}\eta_1(\O):=\inf_{u\in W^{2,2}(\O)\cap W^{1,2}_0(\O),\, \int_\O u^2=1}\int_\O \left(\Delta u\right)^2.\end{equation}  Let $w_1$ be an associated eigenfunction. Then

\begin{equation}
\mu(D)\leq \int_\O D\left(\Delta w_1\right)^2\leq (1+\beta_0)\int_\O \left(\Delta w_1\right)^2=(1+\beta_0)\eta_1(\O),
\end{equation}
which yields the required uniform bound on the eigenvalue. Next,  by  multiplying the eigenequation \eqref{Eq:MuD} by $u_D$ and integrating by parts we obtain 
\begin{equation*}
\int_\O \left(\Delta u_D\right)^2\leq \int_\O D\left(\Delta u_D\right)^2=\mu(D)\leq (1+\beta_0)\eta_1(\O).
\end{equation*}
Since, by elliptic regularity, for any $u\in W^{1,2}_0(\O)$,
\begin{equation}
\Vert u_D\Vert_{W^{2,2}(\O)}\leq C\Vert \Delta u_D\Vert_{L^2(\O)}\end{equation} we obtain the required bound.
\end{proof}
Henceforth,  with no loss of generality we assume $u_D$ and $u_{\alpha,\rho}$ to be nonnegative,  
up to multiplying them by \UUU $-1$. \EEE 
 Our next step is hence to establish the concavity of  
the eigenvalue maps. 
\begin{lemma}\label{Le:Concav}
Let $\alpha\geq 0$ be fixed. The two maps 
\begin{equation}
\mathcal N(\O)\ni D\mapsto \mu(D),\ \mathcal M(\O)\ni \rho\mapsto \lambda_\alpha(\rho)
\end{equation}
are concave.
\end{lemma}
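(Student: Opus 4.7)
The plan is to use the standard fact that an infimum of a family of affine functionals is concave, which applies directly since in both Rayleigh quotients the coefficient functions $D$ and $\rho$ appear linearly.

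First, I would fix $u \in W^{2,2}(\Omega) \cap W^{1,2}_0(\Omega)$ with $u \neq 0$ and introduce the functionals
\begin{equation*}
F_u(D) := \frac{\int_\Omega D(\Delta u)^2}{\int_\Omega u^2}, \qquad G_u^\alpha(\rho) := \frac{\int_\Omega (1+\alpha\rho)(\Delta u)^2 - \int_\Omega \rho u^2}{\int_\Omega u^2}.
\end{equation*}
The map $D \mapsto F_u(D)$ is linear (hence affine) on $L^\infty(\Omega)$, while $\rho \mapsto G_u^\alpha(\rho)$ is affine on $L^\infty(\Omega)$ with constant term $\int_\Omega (\Delta u)^2 / \int_\Omega u^2$.

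Next, I would observe that by definition
\begin{equation*}
\mu(D) = \inf_{u \neq 0} F_u(D), \qquad \lambda_\alpha(\rho) = \inf_{u \neq 0} G_u^\alpha(\rho),
\end{equation*}
so both eigenvalue maps are pointwise infima of families of affine functionals indexed by $u$. It is a classical fact that the pointwise infimum of any collection of affine (equivalently, concave) functionals is concave. Explicitly, for $D_0, D_1 \in \mathcal N(\Omega)$ and $t \in [0,1]$, the convex combination $tD_1 + (1-t)D_0$ still belongs to $\mathcal N(\Omega)$ by the convexity of the admissible class, and for every admissible test function $u$,
\begin{equation*}
F_u(tD_1 + (1-t)D_0) = tF_u(D_1) + (1-t)F_u(D_0) \geq t\mu(D_1) + (1-t)\mu(D_0).
\end{equation*}
Taking the infimum over $u$ on the left-hand side yields $\mu(tD_1 + (1-t)D_0) \geq t\mu(D_1) + (1-t)\mu(D_0)$, which is concavity. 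The identical argument applied to $G_u^\alpha$ on $\mathcal M(\Omega)$ (which is also convex) delivers concavity of $\rho \mapsto \lambda_\alpha(\rho)$.

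There is no real obstacle here: the statement reduces to the abstract lemma that an infimum of affine functionals is concave, and the admissible classes $\mathcal N(\Omega)$ and $\mathcal M(\Omega)$ defined in \eqref{eq:N} and \eqref{eq:M} are convex subsets of $L^\infty(\Omega)$, so the convex combinations remain in the admissible class. The only mild point worth flagging is that one must ensure that the linear dependence on $\rho$ is preserved in $G_u^\alpha$; this is immediate because $\alpha$ is fixed and the term $\int_\Omega \rho u^2$ is linear in $\rho$, so no issue arises from the potential term.
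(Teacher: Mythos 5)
Your proof is correct and is essentially the paper's argument, which simply notes that each eigenvalue map is an infimum of functionals that are affine in the coefficient ($D$ or $\rho$) and hence concave; you have merely written out the convex-combination inequality explicitly. No further comment is needed.
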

\begin{proof}[Proof of Lemma \ref{Le:Concav}]
Each of these two maps is defined as an infimum of linear functionals in their respective variables, so that they are concave.
\end{proof}

This concavity property enables one to write the seemingly naive but in fact crucial reformulation of the eigenvalue problems in terms of bang-bang  functions,  which we now define.
\begin{definition}
A function $D\in \mathcal N(\O)$ is called bang-bang if $D=1+\beta_0\mathds 1_\omega$ for some measurable subset $\omega$ of $\O$. Such functions are the extremal points of $\mathcal N(\O)$ and are denoted $\operatorname{Ext}(\mathcal N(\O))$.

 A function $\rho \in \mathcal M(\O)$ is called bang-bang if $\rho=\mathds 1_{\omega'}$ for some measurable subset $\omega'$ of $\O$. Such functions are the extremal points of $\mathcal M(\O)$ and are denoted $\operatorname{Ext}(\mathcal M(\O))$.
\end{definition}
The definition of bang-bang functions in terms of extremal points is classical \cite[Proposition 7.2.17]{HenrotPierre}. As an immediate consequence of Lemma \ref{Le:Concav} and of the convexity of the admissible sets $\mathcal M(\O)$ and $\mathcal N(\O)$, we obtain the following lemma:

\begin{lemma}\label{Cl:Reformulation} We have \UUU that 
\begin{align*}
&\inf_{D\in \mathcal N(\O)}\mu(D)=\inf_{D\in \operatorname{Ext}(\mathcal N(\O))}\mu(D),\\
&\inf_{\rho\in \mathcal M(\O)}\lambda_\alpha(\rho)=\inf_{\rho\in \operatorname{Ext}(\mathcal M(\O))}\lambda_\alpha(\rho).
\end{align*}
\end{lemma}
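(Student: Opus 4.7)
The plan is to exploit the concavity of the two eigenvalue maps (Lemma \ref{Le:Concav}) in combination with the convex structure of the admissible sets, via the classical principle that a concave functional on a convex set attains its infimum on the extremal boundary. Since $\operatorname{Ext}(\mathcal N(\O)) \subset \mathcal N(\O)$ and $\operatorname{Ext}(\mathcal M(\O)) \subset \mathcal M(\O)$, the inequalities
\begin{equation*}
\inf_{\mathcal N(\O)} \mu \leq \inf_{\operatorname{Ext}(\mathcal N(\O))} \mu, \qquad \inf_{\mathcal M(\O)} \lambda_\alpha \leq \inf_{\operatorname{Ext}(\mathcal M(\O))} \lambda_\alpha,
\end{equation*}
are immediate, so all the work lies in the reverse inequalities.

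Fix $D \in \mathcal N(\O)$. The strategy is to represent $D$ as a barycenter of bang-bang thicknesses and translate this into an inequality via concavity. To this end, I would first observe that $\mathcal N(\O)$ is a convex, weak-$\ast$ closed and bounded---hence weak-$\ast$ compact and metrizable---subset of $L^\infty(\O)$, whose extremal points are precisely the bang-bang elements $1+\beta_0\mathds 1_\omega$ of prescribed integral, in accordance with \cite[Proposition 7.2.17]{HenrotPierre}. The Choquet--Bishop--de Leeuw theorem then delivers a Borel probability measure $\nu_D$ supported on $\operatorname{Ext}(\mathcal N(\O))$ whose barycenter is $D$. By concavity of $\mu$, a Jensen-type inequality yields
\begin{equation*}
\mu(D) \geq \int_{\operatorname{Ext}(\mathcal N(\O))} \mu(E)\, d\nu_D(E) \geq \inf_{\operatorname{Ext}(\mathcal N(\O))}\mu,
\end{equation*}
and taking the infimum over $D \in \mathcal N(\O)$ closes the first identity. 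The second identity, for $\lambda_\alpha$ on $\mathcal M(\O)$, is obtained verbatim, replacing $\mathcal N(\O)$ by $\mathcal M(\O)$ and $\mu$ by $\lambda_\alpha$, since the same convex-extremal structure and concavity apply.

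The main---though quite mild---obstacle is justifying the Jensen inequality in the Borel barycentric sense, which requires weak-$\ast$ upper semicontinuity of the eigenvalue maps. This is however automatic, because the Rayleigh quotient \eqref{Eq:DefMu} presents $\mu$ as a pointwise infimum of maps $D \mapsto \int_\O D(\Delta u)^2/\int_\O u^2$ that are weak-$\ast$ continuous and affine in $D$, and an analogous remark applies to $\lambda_\alpha$ through \eqref{Eq:DefLambda}. Once upper semicontinuity is recorded, the remainder of the argument is a routine application of Choquet theory, and no further input is needed.
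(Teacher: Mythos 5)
Your proposal is correct and follows the same route the paper intends: the lemma is stated there as an immediate consequence of the concavity of $\mu$ and $\lambda_\alpha$ (Lemma \ref{Le:Concav}) together with the convex--extremal structure of $\mathcal N(\O)$ and $\mathcal M(\O)$, and your Choquet/Jensen argument (with the key observation that the eigenvalues are infima of weak-$\ast$ continuous affine maps, hence upper semicontinuous) is a complete justification of exactly that claim. The only remark worth adding is that full Choquet theory is slightly heavier than necessary: since $\mathcal N(\O)$ is the weak-$\ast$ closed convex hull of its extreme points by Krein--Milman, one may approximate $D$ by finite convex combinations $\sum_i\theta_i E_i$ of bang-bang functions, use $\mu\bigl(\sum_i\theta_i E_i\bigr)\geq\min_i\mu(E_i)$ from concavity, and conclude by the same upper semicontinuity.
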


\section{Proof of Theorem \ref{Th:ExistMu}}
\label{sec:thm1}
The proof relies on several preliminary results that we recall in \UUU Subsection \ref{sub:re}. \EEE The proof is \UUU then presented in Subsection \ref{sec:add}. \EEE 

\subsection{Preliminary material about rearrangements}
\label{sub:re}
Let us briefly recall the key concepts of \UUU the Schwarz \EEE rearrangement. For a comprehensive introduction to rearrangements, we refer to \cite{Bandle,Kawohl,Kesavan}.  For a $\mathscr C^2$  domain of $\R^2$,  let $\O^\#=\mathbb B(0; R^\#)$ be the  centered  ball with the same volume as $\O$. For any function $\p\in L^2(\O),\, \p\geq 0$, the Schwarz rearrangement of $\p$ is the unique non-increasing function $\p^\#:\O^\#\to \R_+$ such that, for any $t\geq 0,$
\begin{equation}\operatorname{Vol}\left(\{\p>t\}\right)=\operatorname{Vol}\left(\{\p^\#>t\}\right).\end{equation}
Of particular importance are the following properties of this rearrangement:
\begin{enumerate}
\item Equimeasurability: for any $\p \in L^2(\O)$, $ \p\geq 0$, $$\Vert \p\Vert_{L^2(\O)}=\Vert\p^\#\Vert_{L^2(\O^\#)}^2.$$
\item Hardy-Littlewood inequality: for any non-negative functions $\p_0,\p_1\in L^2(\O)$,
$$\int_\O \p_0\p_1\leq \int_{\O^\#}\p_0^\#\p_1^\#.$$
\end{enumerate}
Another key tool is \UUU the Talenti inequality \cite{Talenti}  which reads as follows. \EEE 
\begin{proposition}[Talenti inequality, {\cite[Theorem 1]{Talenti}}]\label{Pr:Talenti} Let $\O$ be a Lipschitz bounded domain, and let $\B$ be the ball centered in the origin and such that  $\operatorname{Vol}(\O)=\operatorname{Vol}(\B)$. Let $\psi\in L^2(\O),\, \psi\geq 0$. Let $\phi\in W^{1,2}(\O)$ be the solution of 
\begin{equation}\begin{cases}
-\Delta \phi=\psi\text{ in }\O,\, 
\\ \phi=0\text{ on }\partial \O,\end{cases}
\end{equation}
and $\tilde\phi$ be the solution of 
\begin{equation}\begin{cases}
-\Delta \tilde\phi=\psi^\#\text{ in }\B, 
\\ \tilde\phi=0\text{ on }\partial \B.\end{cases}
\end{equation}
Then the inequality 
\begin{equation}
\phi^\#\leq \tilde\phi\end{equation} holds pointwise in $\B$.
\end{proposition}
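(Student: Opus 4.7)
The strategy is to reduce the pointwise comparison $\phi^\# \leq \tilde\phi$ to a one-dimensional ODE comparison on the distribution functions of $\phi$ and $\tilde\phi$, by combining the co-area formula, the planar isoperimetric inequality, and the Hardy-Littlewood rearrangement inequality. The key observation is that for the rearranged problem on $\B$ the solution $\tilde\phi$ is itself radial and nonincreasing, so the chain of inequalities becomes an \emph{equality}, which is what enables the ODE comparison to close.

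I would first introduce the distribution functions $\mu(t) := \Vol(\{\phi > t\})$ and $\tilde\mu(t) := \Vol(\{\tilde\phi > t\})$ for $t \geq 0$, and record that, by the very definition of the Schwarz rearrangement, $\phi^\# \leq \tilde\phi$ pointwise on $\B$ is equivalent to $\mu(t) \leq \tilde\mu(t)$ for every $t \geq 0$. Then, for a.e. admissible level $t$, I would chain the following facts: the co-area formula gives $-\mu'(t) = \int_{\{\phi = t\}} |\nabla \phi|^{-1} \, d\mathcal{H}^{1}$; integrating $-\Delta \phi = \psi$ over $\{\phi > t\}$ and applying the divergence theorem yields $\int_{\{\phi = t\}} |\nabla \phi| \, d\mathcal{H}^1 = \int_{\{\phi > t\}} \psi$; Cauchy-Schwarz then produces $\mathcal{H}^1(\{\phi = t\})^2 \leq -\mu'(t) \int_{\{\phi > t\}} \psi$; and finally the planar isoperimetric inequality $\mathcal{H}^1(\{\phi = t\}) \geq 2\sqrt{\pi \mu(t)}$ combined with the Hardy-Littlewood inequality $\int_{\{\phi > t\}} \psi \leq \int_{\B(0; r(t))} \psi^\#$, where $r(t) := \sqrt{\mu(t)/\pi}$, upgrades the estimate to
\[ 4 \pi \mu(t) \leq -\mu'(t) \int_{\B(0; r(t))} \psi^\#. \]
The identical computation applied to $\tilde\phi$ on $\B$ is saturated at every step, since its level sets $\{\tilde\phi > t\}$ are centered balls and $|\nabla \tilde\phi|$ is constant on each such sphere; this yields the analogous \emph{identity} for $\tilde\mu$.

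Starting from the common initial condition $\mu(0) = \Vol(\O) = \Vol(\B) = \tilde\mu(0)$ and knowing that both distribution functions vanish at the respective essential suprema, a Gronwall-type comparison on the quantile functions $\mu^{-1}$ and $\tilde\mu^{-1}$ delivers $\mu \leq \tilde\mu$, i.e. $\phi^\# \leq \tilde\phi$ on $\B$. The main technical obstacle is the handling of level sets on which $|\nabla \phi|$ vanishes, which can a priori spoil both the co-area step and the absolute continuity of $\mu$; this is the delicate point of Talenti's original argument and is treated by a truncation and Sard-type approximation before a passage to the limit (see \cite{Talenti, Kesavan}). A secondary subtlety is to ensure that the Hardy-Littlewood step is genuinely improving, which is guaranteed by the assumption $\psi \geq 0$.
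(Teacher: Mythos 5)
The paper does not actually prove this proposition: it is quoted as an external classical result, \cite[Theorem 1]{Talenti}, so there is no in-paper argument to compare against. Your sketch is a faithful outline of Talenti's original symmetrization proof, and every inequality points the right way: the divergence-theorem identity $\int_{\{\phi=t\}}|\nabla\phi|\,d\mathcal H^1=\int_{\{\phi>t\}}\psi$, Cauchy--Schwarz on the level set, the planar isoperimetric inequality, and Hardy--Littlewood combine into $4\pi\mu(t)\leq -\mu'(t)\int_{\B(0;r(t))}\psi^\#$, and the chain is saturated for the radial problem on $\B$, which is exactly what makes the comparison close. The one step I would tighten is the final ``Gronwall-type comparison'': since $\mu$ is merely nonincreasing and not a priori absolutely continuous, the standard way to conclude is to rewrite the differential inequality as $1\leq(-\mu'(t))\,\tfrac{1}{4\pi\mu(t)}\int_0^{\mu(t)}\psi^*$ (with $\psi^*$ the one-dimensional decreasing rearrangement, so that $\int_0^{\mu(t)}\psi^*=\int_{\B(0;r(t))}\psi^\#$), integrate in $t$, and invoke the change-of-variables inequality for monotone functions to obtain $\phi^*(s)\leq\int_s^{\Vol(\O)}\tfrac{1}{4\pi\sigma}\bigl(\int_0^\sigma\psi^*(\tau)\,d\tau\bigr)\,d\sigma=\tilde\phi^*(s)$, which is the claimed pointwise bound since $\tilde\phi$ is radially nonincreasing. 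You correctly flag the levels where $\nabla\phi$ vanishes as the delicate point; that is indeed where Talenti's truncation/approximation argument is needed, and deferring it to \cite{Talenti,Kesavan} is appropriate for a quoted result.
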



The proof of Theorem \ref{Th:ExistMu} relies on some results of Alvino, Lions, and Trombetti \cite{Alvino1989}.  These results have been  used to show existence properties for two-phases optimisation problems in the case of balls by Conca, Mahadevan, and Sanz \cite{ConcaMahadevanSanz}.  The strategy from \cite{Alvino1989}  reads as follows:  using a suitable rearrangement  one checks  that, when $\O=\mathbb B(0;R)$  for a suitable $R>0$,  one can restrict to  minimising sequences  of radially symmetric functions.  Such symmetry then enables to use  a  powerful compactness result  to obtain existence of a minimiser. What is notable in our approach is that the structure of the biharmonic operator makes it so that we do not require any symmetry property of the domain,  nor  of the elements of the minimising sequence. 

 Let us introduce a comparison relation: for any two functions $f,g\in L^2(\O),\, f,g\geq 0$, we write
\begin{equation}
f\prec g\end{equation} if, for any $r\in [0; R^\#]$, 
\begin{equation}
\int_{\mathbb B(0;r)} f^\#\leq \int_{\mathbb B(0;r)} g^\#\end{equation} and if 
\begin{equation}
\int_{ \B(0,R^\#)} f^\#=\int_{ \B(0,R^\#)} g^\#.\end{equation}

\begin{remark}\textit{ It should be noted that if $g$ is $L^\infty$, and if $f\prec g$, then $f$ is $L^\infty$ as well and $ \Vert f\Vert_{L^\infty}\leq \Vert g\Vert_{L^\infty}$}. 
\end{remark}

Let $\O^\#=\mathbb B(0; R^\#)$, and  let $\B^*:=\B(0;r^*)$ be the only ball centered in the origin of volume $({D_0-\Vol(\Omega)})/{\beta_0}$. 
We define $\overline D^\#$ as
\begin{equation}\overline D^\#=1+\beta_0\mathds 1_{\B^*}.\end{equation} First of all let us notice that for any $D\in \mathcal N(\O)$ we have 
\begin{equation}D^\#\prec \overline D^{\#} \ \ \UUU \text{and} \EEE \ \  \int_\O D=\int_{\O^\#}\overline D^\#.\end{equation} 
We define the class
\begin{equation}
\mathscr C\left(\overline D^\#\right):=\left\{f\in L^2(\O)\ : \ f\geq 0,\,  f^\#=\overline D^\#\right\},\end{equation}  which exactly corresponds to the set of bang-bang functions:
\begin{equation}\operatorname{Ext}(\mathcal N(\O))=\mathscr C\left(\overline D^\#\right).\end{equation}
This class $\mathscr C(\psi)$ is not closed under  weak-$\ast$ $L^\infty $  convergence.  Its  weak-$\ast$ $L^\infty $  compactification has been  proved  in \cite{Alvino1989}   to be 
\begin{equation}
\mathscr K\left(\overline D^\#\right):=\left\{f\in L^2(\O)\ : \ f\geq 0,\, f\prec \overline D^\#\right\}.\end{equation}
From \cite[Theorem 2.2]{Alvino1989}, $\mathscr K\left(\overline D^\#\right)$ is  closed and  weakly-$\ast$  compact for the $L^\infty $-topology (this result is a generalisation of a result by Migliaccio \cite{Migliaccio}).  Furthermore,  from \cite[Theorem 2.2]{Alvino1989} we have
\begin{equation}
\operatorname{Ext}\left(\mathscr K\left(\overline D^\#\right)\right)=\mathscr C\left(\overline D^\#\right).
\end{equation}
As a consequence of  the  general result \cite[Proposition 2.1]{HenrotPierre} or directly from  weak-$\ast$  convergence to extreme points of convex sets, if a sequence $\{f_k\}_{k\in \N}\in \mathscr K\left(\overline D^\#\right)$  weakly-$\ast$  converges to $f\in \mathscr C\left(\overline D^\#\right)$, then the convergence is strong in $L^p$, $p\in [1;+\infty)$,  see \cite{Visintin}.

\subsection{Proof of Theorem \ref{Th:ExistMu}}\label{sec:add}
What should be noted is that, here, the  weak-$\ast$ $L^\infty $  convergence of a sequence $\{D_k\}_{k\in \N}\in \mathcal N(\O)^\N$ does not imply the convergence of the associated sequence of eigenvalues $\{\mu(D_k)\}_{k\in \N}$. As is clear from the eigenequation 
\begin{equation}\label{Eq:Mu}
\begin{cases}
\Delta \left(D\Delta u_\rho\right)=\mu(D)u_D\text{ in }\O, 
\\ u_D=\Delta u_D=0\text{ on }\partial \O,\end{cases} \end{equation}
the correct convergence that would imply lower-semi continuity of the eigenfunction is the convergence of the sequence $\left\{\frac1{D_k}\right\}_{k\in \N}$. 

\begin{lemma}\label{Cl:CvInv}
 Let $\delta$ and $M_1$ be two positive constants.  Let $\{D_k\}_{k\in \N}\in L^\infty(\O)^\N\,$, $\inf_{k,\O}D_k\geq \delta>0\,$,   and   $\sup_k \Vert D_k\Vert_{L^\infty(\O)}\leq M_1$. Assume there exists $C_\infty\in L^\infty(\O)$  such that 
\begin{equation}
\frac1{D_k}\underset{k\rightarrow +\infty}\rightharpoonup C_\infty \text{  weakly-$\ast$ in $L^\infty$.  }
\end{equation}
Then, up to a subsequence, 
\begin{equation}
\mu (D_k)\underset{k\to \infty}\rightarrow \mu\left(\frac1{C_\infty}\right).
\end{equation}
\end{lemma}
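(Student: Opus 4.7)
The plan is to bypass the failure of weak-$\ast$ convergence to pass to the limit in the Rayleigh quotient by introducing an auxiliary variable with extra regularity. Write $u_k:=u_{D_k}$ and $\mu_k:=\mu(D_k)$. By (a straightforward adaptation of) Lemma \ref{Cl:Misc} to the bounds $\delta\le D_k\le M_1$, the sequences $\{\mu_k\}$ and $\{u_k\}$ are bounded in $\R$ and in $W^{2,2}(\O)$ respectively. Extract a subsequence (not relabelled) such that $\mu_k\to\mu_\infty$, $u_k\rightharpoonup u_\infty$ weakly in $W^{2,2}(\O)$ and strongly in $L^2(\O)$; then $u_\infty\ge 0$ and $\int_\O u_\infty^2=1$.

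The decisive step is to trade the ill-behaved product $D_k\Delta u_k$ for a function with better compactness. Set $v_k:=D_k\Delta u_k\in L^2(\O)$. Writing the eigenequation \eqref{Eq:Mu} in weak form against $\phi\in W^{2,2}(\O)\cap W^{1,2}_0(\O)$ yields $\int_\O v_k\Delta\phi=\mu_k\int_\O u_k\phi$, which is exactly the weak formulation of the Poisson problem $\Delta v_k=\mu_k u_k$ in $\O$, $v_k=0$ on $\partial\O$. Since $\partial\O\in\mathscr C^2$ and $\mu_k u_k$ is uniformly bounded in $L^2(\O)$, elliptic regularity gives $v_k\in W^{2,2}(\O)\cap W^{1,2}_0(\O)$ with a uniform $W^{2,2}$-bound. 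By Rellich compactness, along a further subsequence $v_k\to v_\infty$ strongly in $L^2(\O)$, with $\Delta v_\infty=\mu_\infty u_\infty$ and $v_\infty\in W^{1,2}_0(\O)$.

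To identify $v_\infty$ I would exploit $\Delta u_k=v_k/D_k$. The hypotheses yield $1/M_1\le C_\infty\le 1/\delta$ a.e., so $1/C_\infty\in L^\infty(\O)$. For any $\phi\in L^2(\O)$, Cauchy--Schwarz gives $v_k\phi\to v_\infty\phi$ strongly in $L^1(\O)$; combined with the weak-$\ast$ convergence of $1/D_k$ to $C_\infty$ in $L^\infty(\O)$ and the uniform bound $\|1/D_k\|_{L^\infty}\le 1/\delta$, one passes to the limit
\[
\int_\O\frac{v_k}{D_k}\phi=\int_\O\frac{1}{D_k}(v_k\phi)\longrightarrow \int_\O C_\infty v_\infty\phi,
\]
whence $v_k/D_k\rightharpoonup C_\infty v_\infty$ weakly in $L^2(\O)$. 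On the other hand $\Delta u_k=v_k/D_k\rightharpoonup\Delta u_\infty$ weakly in $L^2(\O)$, so by uniqueness $\Delta u_\infty=C_\infty v_\infty$, i.e.\ $v_\infty=\frac{1}{C_\infty}\Delta u_\infty$. Substitution into $\Delta v_\infty=\mu_\infty u_\infty$ shows that $u_\infty$ is a nonnegative, $L^2$-normalised eigenfunction of the weighted biharmonic operator with weight $1/C_\infty$ under Navier boundary conditions. By Lemma \ref{Cl:Misc}, the first eigenvalue is the unique one whose eigenfunction has constant sign, so $\mu_\infty=\mu(1/C_\infty)$. Since the limit does not depend on the extracted subsequence, the whole sequence converges.

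The hard part is precisely what makes H-convergence nontrivial: the product $D_k\Delta u_k$ cannot be passed to the limit under the sole weak-$\ast$ $L^\infty$ convergence of $D_k$ and weak $L^2$ convergence of $\Delta u_k$. The reason the argument nonetheless succeeds for the biharmonic operator (while it notoriously fails for the second-order two-phase operator $-\n\cdot(D\n)$) is that $v_k=D_k\Delta u_k$ inherits one additional derivative from the Poisson equation it satisfies, thereby unlocking strong $L^2$ compactness. This is the biharmonic-specific mechanism behind the $H$-convergence--type approach invoked throughout the paper.
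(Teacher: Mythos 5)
Your proof is correct and follows essentially the same route as the paper's: both introduce the auxiliary function $\pm D_k\Delta u_k$, use the second-order Dirichlet problem it satisfies to obtain strong $L^2$ compactness, pass to the limit in the product with the weak-$\ast$ convergent $1/D_k$, and identify the limit eigencouple via the constant-sign characterisation of the first eigenvalue. The only cosmetic differences are the sign convention and that you derive a $W^{2,2}$ bound on $v_k$ where the paper is content with a $W^{1,2}_0$ bound (either suffices for Rellich).
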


\begin{proof}[Proof of  Lemma \ref{Cl:CvInv}]
To  lighten  notations, for any $k\in \N$  we denote by 
$u_k$ the eigenfunction associated with $\mu(D_k)$ and we define   \begin{equation}\label{Eq:uk}z_k = -D_k \Delta u_k.\end{equation}
 From Lemma \ref{Cl:Positivity}, we  have $z_k=0$ on $\partial \O$ and $z_k\geq 0$ in $\O$. 

Furthermore  from  Lemma  \ref{Cl:Misc} the sequence $\left\{\mu(D_k)\right\}_{k\in\N}$ is bounded.  We  can thus choose $\mu_\infty\in \R$ such that 
$\mu(D_k)\underset{k\to \infty}\rightarrow \mu_\infty$  for some not relabelled subsequence.

By assumption, we know that 
$\frac1{D_k}\underset{k\to \infty}\rightharpoonup C_\infty \text{  weakly-$\ast$ in $L^\infty$}.$  Since 
$\frac1{1+\beta_0}\leq \frac1{D_k}\leq 1$, the same $L^\infty$ bounds hold for $\frac1{C_\infty}.$
By  Lemma  \ref{Cl:Misc}, we have a uniform $W^{2,2}(\O)$ bound on the family $\{u_k\}_{k\in \N}$. Since $z_k$ solves the equation 
\begin{equation}\label{Eq:zk}
\begin{cases}
-\Delta z_k=\mu(D_k)u_k\text{ in }\O, 
\\ z_k=0\text{ on }\partial \O\end{cases}\end{equation}
we obtain a uniform $W^{1,2}_0(\O)$ bound on $\{z_k\}_{k\in \N}$,  namely,  there exists $M$ such that 
\begin{equation}
\forall k\in \N,\, \Vert z_k\Vert_{W^{1,2}_0(\O)}\leq M.\end{equation}

As a consequence, there exists $z_\infty \in L^2(\O)$ such that 
\begin{equation}z_k\underset{k\to \infty}\rightarrow z_\infty \text{ weakly in $W^{1,2}_0(\O)$  and   strongly in $L^2(\O)$}\end{equation}
 for some not relabelled subsequence.  
There also exists $u_\infty\in W^{2,2}(\O)\cap W^{1,2}_0(\O)$ such that 
\begin{equation}u_k\underset{k\to \infty}\rightarrow u_\infty \text{ weakly in $W^{2,2}(\O)$  and  strongly in $W^{1,2}_0(\O)$}\end{equation}
 for some not relabelled subsequence.  
Passing to the limit in the weak formulation of \eqref{Eq:uk}, the triple $(u_\infty,C_\infty,z_\infty)$ solves
\begin{equation}
\begin{cases}
-\Delta u_\infty=C_\infty z_\infty \text{ in }\O, 
\\z_\infty=0\text{ on }\partial \O, \end{cases}\end{equation} and since, for any $k$, $u_k\geq 0$ and $\int_\O u_k^2=1$, we have 
$$u_\infty\geq 0 \ \ \UUU \text{and} \ \ \EEE \int_\O u_\infty^2=1.$$
Passing to the limit in the weak formulation \eqref{Eq:zk} we obtain that  $(z_\infty,\mu_\infty,u_\infty)$ solves 
\begin{equation}
\begin{cases}
-\Delta z_\infty=\mu_\infty u_\infty\text{ in }\O,
\\ z_\infty=0\text{ on }\partial \O.
\end{cases}
\end{equation}
As a consequence, $(C_\infty,u_\infty)$ solves 
\begin{equation}
\begin{cases}
\Delta\left(\frac1{C_\infty}\Delta u_\infty\right)=\mu_\infty u_\infty\text{ in }\O, 
\\ u_\infty=\Delta u_\infty=0\text{ on }\partial \O, 
\\ u_\infty\geq 0,\, \int_\O u_\infty^2=1.
\end{cases}
\end{equation}
However, the first eigenvalue being the only having a constant sign eigenfunction, we conclude that $(u_\infty,\mu_\infty)$ is the first eigencouple associated to  $\frac1{C_\infty}$  or, in other words, that 
$\mu_\infty=\mu\left(\frac1{C_\infty}\right).$ Thus, the sequence $\{\mu(D_k)\}_{k\in \N}$ has a unique closure point, and hence the entire sequence converges, so that 
\begin{equation}\underset{k\to \infty}\lim \mu(D_k)=\mu\left(\frac1{C_\infty}\right).\end{equation} 
\end{proof}

We now treat the optimisation problem \eqref{Eq:PvMu} in a slightly  different  way. \UUU For \EEE any $D\in L^\infty(\O)$ \UUU with \EEE $ \inf D>0$ \UUU we \EEE  set 
\begin{equation}\eta(D):=\mu\left(\frac1D\right).\end{equation}  We recall that from  Lemma  \ref{Cl:Reformulation}  and Subsection \ref{sub:re}  we have

$$\inf_{D\in \mathcal N(\O)}\mu(D)=\inf_{\{D\in \mathcal N(\O):\, D^\#=\overline D^\#\}}\mu(D).$$ Since $ \mathscr C\left(\overline D^\#\right)=\{D\in \mathcal N(\O)\ : \  D^\#=\overline D^\#\}$ this  can be equivalently rewritten as 
\begin{equation}
\inf_{D\in \mathcal N(\O)}\mu(D)=\inf_{D\in \mathscr C\left(\overline D^\#\right)}\mu(D).
\end{equation}
 Eventually,  as $\overline D^\#$ is bang-bang,  it follows that  $D\in \mathscr C\left(\overline D^\#\right)$ if and only if $\frac1D\in \mathscr C\left(\left(\frac1{\overline D^\#}\right)^\#\right)$. 

Given the definition of $\eta$,  problem  \eqref{Eq:PvMu} is equivalent to
\begin{equation}\label{Eq:La}
\inf_{ \left\{E\in \mathscr C\left(\left(\frac1{\overline D^\#}\right)^\#\right)\right\}}\eta\left(E\right),\end{equation} in the sense that, if $E$ solves \eqref{Eq:La} then $\frac1E$ solves \eqref{Eq:PvMu}.

The key lemma is thus the following:
\begin{lemma}\label{Cl:Eta} 
\begin{enumerate}
\item The variational problem 
\begin{equation}\label{Eq:PvEta}
\inf_{E\in  \mathscr K \left(\left(\frac1{\overline D^\#}\right)^\#\right)}\eta\left(E\right)\end{equation} 
has a solution $E^*$.
\item The solutions of the variational problem \eqref{Eq:PvEta} belong to $ \mathscr C\left(\left(\frac1{\overline D^\#}\right)^\#\right)$.
\end{enumerate}
\end{lemma}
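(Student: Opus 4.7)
My plan is to establish (1) via weak-$*$ compactness together with Lemma \ref{Cl:CvInv}, and (2) via a first-order Euler--Lagrange analysis combined with a Sobolev truncation identity.

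For (1), I would take a minimising sequence $\{E_k\} \subset \mathscr{K}\bigl((1/\overline{D}^\#)^\#\bigr)$. By \cite[Theorem 2.2]{Alvino1989}, this set is weak-$*$ compact, and its elements satisfy $1/(1+\beta_0) \leq E \leq 1$ pointwise (the upper bound from the remark in the excerpt, the lower bound from testing $f\prec \psi$ against the convex function $t\mapsto (1/(1+\beta_0)-t)_+$, which vanishes on $\psi = (1/\overline{D}^\#)^\#$). Along a weak-$*$ convergent subsequence $E_k \rightharpoonup^* E^* \in \mathscr{K}$, the thicknesses $D_k := 1/E_k$ remain uniformly bounded in $[1, 1+\beta_0]$ and satisfy $1/D_k \rightharpoonup^* E^*$. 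Lemma \ref{Cl:CvInv} then yields $\eta(E_k) = \mu(D_k) \to \mu(1/E^*) = \eta(E^*)$, so $E^*$ attains the infimum.

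For (2), let $E^*$ be any minimiser of \eqref{Eq:PvEta}; set $D^* := 1/E^*$, let $u^*$ be the positive $L^2$-normalised first eigenfunction for $\mu(D^*)$, and $z^* := -D^* \Delta u^*$, so that $-\Delta z^* = \mu(D^*) u^* \geq 0$ with $z^* \in W^{2,2}(\Omega) \cap W^{1,2}_0(\Omega)$. Differentiating $\eta(E) = \mu(1/E)$ through the chain rule $dD = -h/E^2$ inside the standard eigenvalue derivative $\mu'(D)[\delta D] = \int \delta D\, (\Delta u_D)^2$, and using $\Delta u^* = -E^* z^*$, yields $\eta'(E^*)[h] = -\int_\Omega h\, (z^*)^2$. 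The Lagrange condition for the constraints $\int E = \mathrm{const}$ and $1/(1+\beta_0) \leq E \leq 1$ then furnishes a multiplier $\lambda \in \mathbb{R}$ such that $(z^*)^2 \equiv \lambda$ on the ``free'' set $A := \{1/(1+\beta_0) < E^* < 1\}$, with compatible one-sided inequalities on $\{E^*=1\}$ and $\{E^* = 1/(1+\beta_0)\}$.

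Finally, I would prove $|A| = 0$, which gives $E^* \in \mathscr{C}\bigl((1/\overline{D}^\#)^\#\bigr)$. By Lemma \ref{Cl:Positivity} and the strong maximum principle (applied to $u^*$ via $-\Delta u^* = E^* z^* \geq 0$ and to $z^*$), both $u^*$ and $z^*$ are strictly positive on $\mathrm{int}(\Omega)$. If $\lambda = 0$, then $z^* \equiv 0$ on $A$, contradicting $z^* > 0$ in the interior unless $|A| = 0$. If $\lambda > 0$, then $z^*$ is a positive constant on $A$; applying twice to $z^* \in W^{2,2}(\Omega)$ the Sobolev identity $\nabla f = 0$ a.e.\ on $\{f = c\}$ (valid for $f \in W^{1,2}$) gives $\Delta z^* = 0$ a.e.\ on $A$, so $\mu(D^*) u^* = 0$ a.e.\ on $A$, again forcing $|A| = 0$. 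The main obstacle I anticipate is the rigorous derivation of the Lagrange-multiplier condition in the $L^\infty$ setting, since $\mathscr{K}$ has no smooth manifold structure; this is handled by admissible two-sided mean-zero test perturbations $\pm h$ supported on $A$, for which $E^* \pm th$ remains in $\mathscr{K}$ for small $|t|$.
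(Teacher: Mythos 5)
Your proposal is correct and follows essentially the same route as the paper: weak-$*$ $L^\infty$ compactness of $\mathscr K$ combined with Lemma \ref{Cl:CvInv} for existence, and the first-order optimality condition forcing the switch function $(\Delta u_E)^2/E^2=(z^*)^2$ to be constant on the intermediate set $\{1/(1+\beta_0)<E<1\}$, whence $u_E=0$ there, contradicting the positivity of the eigenfunction. Your write-up is if anything slightly more explicit than the paper's: the pointwise bounds on elements of $\mathscr K$, the case distinction on the multiplier, and the level-set identity $\nabla f=0$ a.e.\ on $\{f=c\}$ are spelled out where the paper appeals to \cite{Privat2015}.
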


\begin{proof}[Proof of  Lemma \ref{Cl:Eta}]  \UUU Point 1. \EEE The existence of a minimiser  for problem \eqref{Eq:PvEta}  follows from the  weak-$\ast$  $L^\infty $  compactness of the set $ \mathscr K\left(\left(\frac1{\overline D^\#}\right)^\#\right)$.  Let  $\{E_k\}_{k\in \N}\in \mathscr K\left(\left(\frac1{\overline D^\#}\right)^\#\right)^\N$  be a minimising sequence, and let  $E_\infty\in \mathscr K\left(\left(\frac1{\overline D^\#}\right)^\#\right)$  be one of its weak closure points.  From  Lemma  \ref{Cl:CvInv},
\begin{equation}\eta(E_k)=\mu\left(\frac1{E_k}\right)\underset{k\to \infty}\rightarrow \mu\left(\frac1{E_\infty}\right)=\eta\left(E_\infty\right).\end{equation} Hence,  $E_\infty$  is a solution of \eqref{Eq:PvEta}.

\UUU Point 2. \EEE To prove the second point of the lemma, it suffices to prove that no interior point $E\in \mathscr K\left(\left(\frac1{\overline D^\#}\right)^\#\right)$ satisfies local first order optimality conditions. By standard theorems  \cite{Kato}  the simplicity of $\eta\left(E\right)$, obtained as in Lemma \ref{Cl:Misc}, enables  us  to differentiate it with respect to $E$.   Let $E\in \mathscr K\left(\left(\frac1{\overline D^\#}\right)^\#\right)$ and $h$ be an admissible perturbation at $E$ (i.e. $E+th\in  \mathscr K\left(\left(\frac1{\overline D^\#}\right)^\#\right)$ for $t>0$ small enough).  For the sake of notational simplicity, let  $u_E$ be the eigenfunction associated with $\eta(E)$. Let $\dot \eta$ and $\dot u$ be the derivative of $\eta(E+th)$ and its associated eigenfunction with respect to $t$ evaluated in the origin,  respectively.   Then, $(\dot u,\, \dot \eta)$ solves
\begin{equation}
\label{eq:system}
\begin{cases}
\Delta \left(\frac1E\Delta \dot u\right)-\Delta \left(\frac{h}{E^2} \Delta u_E\right)=\dot \eta u_E+\eta\left(E\right)\dot u, 
\\ \dot u=\Delta \dot u=0\text{ on }\partial \O, 
\\ \int_\O u_E\dot u=0.
\end{cases}
\end{equation}
As a consequence,  testing \eqref{eq:system} against $u_E$, using the eigenequation for $u_E$, and integrating by parts, from the fact that $\int_{\Omega} u_E^2=1$, we find  
\begin{equation}\dot \eta=\int_\O \frac{h}{E^2}\left(\Delta u_E\right)^2.\end{equation}
Thus, if $E$ is not a bang-bang function, that is, if $\omega_0:=\left\{\frac1{1+\beta_0}<E<1\right\}$ is a set of positive measure, there exists a constant $C$ such that $\frac{(\Delta u_E)^2}{E^2}=C\text{ in }\omega_0,$ 
 see for instance \cite[Theorem 1, Remark 1]{Privat2015}.  
Plugging this in the eigenequation 
$\Delta\left(\frac1E\Delta u_E\right)=\eta(E)u_E$ we obtain 
\begin{equation*}
u_E=0\text{ in }\omega_0.
\end{equation*}  This contradicts the positivity of $u_E$  inside $\O$, which is a consequence of the strong maximum principle and of Lemma \ref{Cl:Misc}. 
\end{proof}

 Relying on Lemma \ref{Cl:Eta}, we \UUU can eventually  prove \EEE Theorem \ref{Th:ExistMu} by computing  
\begin{align*}\inf_{D  \in  \mathscr C\left(\overline D^\#\right)}\mu(D)&=\inf_{D\in  \mathscr C\left(\overline D^\#\right)}\eta\left(\frac1D\right)=\min_{E\in  \mathscr K\left(\left(\frac1{\overline D^\#}\right)^\#\right) }\eta(E)\\
                                                                        &\quad =\min_{E\in  \mathscr C\left(\left(\frac1{\overline D^\#}\right)^\#\right) }\eta(E)=\eta(E^*)=\mu\left(\frac1{E^*}\right).\end{align*}
                                                                      Since $ E^* \in \mathscr C\left(\frac1{\overline D^\#}\right)^\#$,  we have that  $\frac{1}{ E^*}\in \mathscr C\left(\overline D^\#\right)$.  This entails the existence of a minimizer, hence Theorem \ref{Th:ExistMu} \UUU holds. \EEE

\section{Proof of Theorem \ref{Th:PV0}}
\label{sec:thm2}
 Recall that here $\Omega=\B(0,R)$ for some $R>0$. 
The core idea of the proof is to use \UUU the Talenti \EEE inequality, as was done in \cite{Anedda} to solve \eqref{Eq:PvMu}. Let us briefly recall this inequality:

Let $D\in \mathcal N(\O)$ and $u_D$ be the associated eigenfunction  solving  \eqref{Eq:MuD}. Let $z_D$ be defined as
\begin{equation}\label{Eq:Mainz}
-\Delta u_D=z_D\text{ in }\O.
\end{equation}
Since $u_D\in H^2(\O)$  we have that  $z_D\in L^2(\O)$.

 From  $\Delta u_D=0$ on $\partial \O$ we obtain $z_D=0$ on $\partial \O$. Furthermore, from  Lemma  \ref{Cl:Positivity},  there holds  $z_D \geq 0$  in $\Omega$.  Let us consider its Schwarz rearrangement $z_D^\#$.  Since $\Omega$ is a ball centered in the origin, clearly $\Omega^\#=\Omega$. Let  $\tilde u_D$ be the solution of 
\begin{equation}\label{Eq:Maintz}
-\Delta \tilde u_D=z_D^\#\text{ in }\O, 
\\ \tilde u_D=0\text{ on }\partial \O.
\end{equation}

From \UUU the Talenti \EEE inequality, Proposition \ref{Pr:Talenti}, we have 
\begin{equation}
0\leq u_D^\#\leq \tilde u_D\text{ in }\O.\end{equation} This inequality holds pointwise and hence guarantees
\begin{equation}\label{Eq:Norm}
1=\int_\O u_D^2=\int_{\O}\left(u_D^\#\right)^2\leq \int_{\O} \tilde u_D^2.
\end{equation}  Furthermore, since $z_D^\#$ is a rearrangement of $z_D$, for any $V \in [0;\Vol(\O)]$  we have that  
\begin{equation}\label{Eq:Bathtub}\inf_{F\subset \O,\, \Vol(F)=V}\int_{F} z_D^2= \inf_{G\subset \B,\, \Vol(G)=V}\int_{G}(z_D^\#)^2.\end{equation}

 Take now  $V=({D_0-\Vol{\O}})/{\beta_0}$. The function $z_D^\#$ being non-increasing, the  so-called {\it bathtub} principle  \cite[Theorem 1.14]{LiebLoss}  ensures that 
 \begin{equation}\label{Eq:Bathtub2}\inf_{G\subset \B,\, \Vol(G)=V}\int_{\O}(1+\beta_0\mathds 1_G)(z_D^\#)^2=\int_\B\overline D_\# (z_D^*)^2.
 \end{equation}
On the one hand, the Schwarz rearrangement is measure preserving, hence 
\begin{equation}
\int_\O (\Delta u_D)^2=\int_\O z_D^2=\int_\B (z_D^\#)^2=\int_\B \left(\Delta \tilde u_D\right)^2.\end{equation}
On the other hand, from \eqref{Eq:Bathtub}-\eqref{Eq:Bathtub2} we get 

\begin{equation}\label{Eq:Dr}
\int_\O D(\Delta u_D)^2\geq \int_\B \overline D_\# \left(\Delta \tilde u_\rho\right)^2.
\end{equation}
Combining \eqref{Eq:Dr} with \eqref{Eq:Norm} and plugging these estimates in the Rayleigh-quotient formulation of the eigenvalues we obtain
\begin{equation}
\mu(D)=\frac{\int_\O D(\Delta u_D)^2}{\int_\O u_D^2}\geq \frac{\int_\B \overline D_\#(\Delta \tilde u_D)^2}{\int_\O \tilde u_D^2} \geq \mu(\overline D_\#),\end{equation}
 and the assertion follows.

\section{Proof of Theorem \ref{Th:Stability}}
\label{sec:thm3} 
We are now working  under the assumption that $\Omega=\B(0,R)$ for some $R>0$. Recall  that $\rho^*=\mathds 1_{\mathbb B^*}$ is the characteristic function of a  ball centered in the origin and  of volume $V_0$. By the same arguments as in \cite[Theorem 3.3]{Anedda}, $\rho^*$ is the unique minimiser of $\lambda_0$ in $\mathcal M(\O)$ and $u_{0,\rho^*}$ is radially symmetric non-increasing. 

Furthermore, $u_{0,\rho^*}$ is strictly decreasing and there holds  \begin{equation}\label{Eq:De}\forall \e>0,\, \exists \delta(\e)>0,\, \forall r \in (\e;R]: \quad  \left|\frac{\partial u_{0,\rho^*}}{\partial r}\right|\geq \delta (\e).\end{equation}
 Indeed, this follows from the following fact: replacing $u_{0,\rho^*}$ with the solution $w$ to 
$$\begin{cases}-\Delta w=|\Delta u|^*&\text{ in }\B(0;R), 
\\ w\in W^{1,2}_0(\O),\end{cases}$$ we obtain, combining the arguments of Lemma \ref{Cl:Misc} and \UUU the Talenti \EEE inequality, that 
$$\frac{\int_\O\left(\Delta w\right)^2-\int_\O \rho^* w^2}{\int_\O w^2}\leq \frac{\int_\O\left(\Delta u_{0,\rho^*}\right)^2-\int_\O \rho^* u_{0,\rho^*}^2}{\int_\O u_{0,\rho^*}^2}.$$ Thus, $w$ is also an eigenfunction. By simplicity of $\lambda_0(\rho^*)$, $u_{0,\rho^*}$ and $w$ are linearly dependent. As a consequence,  $u_{0,\rho^*}=cw$ for some constant $c>0$; this sign condition comes from the fact that both $u_{0,\rho^*}$ and $w$ are non-negative. Thus, it follows that $-\Delta u_{0,\rho^*}=\left| \Delta u_{0,\rho^*}\right|^*$. Since $\Delta u_{0,\rho^*}\neq 0$, $\left| \Delta u_{0,\rho^*}\right|^*(0)>0$. Setting $z=\Delta u_{0,\rho^*}=-|\Delta u_{0,\rho^*}|^*$ we have, in radial coordinates
$$r\frac{\partial u_{0,\rho^*}}{\partial r}(r)=\int_0^r\tau z(\tau)d\tau<0,$$ which concludes the proof.

 We   prove Theorem \ref{Th:Stability}  by contradiction and assume that, for any $\alpha>0$, there exists a radially symmetric  $\rho_\alpha \in \mathcal M(\O)$ such that 
\begin{equation}
\label{eq:contr}
\lambda_\alpha(\rho_\alpha)\leq \lambda_\alpha(\rho^*), \  \rho_\alpha \neq \rho^*.\end{equation}
 Let us prepare a preliminary lemma. 

\begin{lemma}\label{Cl:Convergence}
We can assume that $\rho_\alpha$ is a bang-bang function. Furthermore, we have $\rho_\alpha\underset{\alpha \to 0}\rightarrow \rho^*$ strongly in $L^1$.
\end{lemma}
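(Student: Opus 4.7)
The plan proceeds in three moves: a bang-bang reduction via concavity, the extraction of a weak-$*$ limit for the sequence $\{\rho_\alpha\}$ as $\alpha\to 0$, and the identification of that limit with $\rho^*$ via the uniqueness recalled at the start of this section. For the first move, by Lemma~\ref{Le:Concav} the map $\rho\mapsto \lambda_\alpha(\rho)$ is concave on $\mathcal M(\O)$, and restricting it to the convex subclass $\mathcal M_{\mathrm{rad}}(\O)$ of radially symmetric densities preserves concavity. The extreme points of $\mathcal M_{\mathrm{rad}}(\O)$ are precisely the characteristic functions of radial sets of volume $\rho_0$. Repeating the argument of Lemma~\ref{Cl:Reformulation} in this convex subclass, the infimum of $\lambda_\alpha$ over $\mathcal M_{\mathrm{rad}}(\O)$ is attained at a radial bang-bang function, which by the hypothesis $\lambda_\alpha(\rho_\alpha)\leq \lambda_\alpha(\rho^*)$ with $\rho_\alpha\neq \rho^*$ can be chosen distinct from $\rho^*$; I replace $\rho_\alpha$ by such a minimiser.

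Next I extract a limit. Since $0\leq \rho_\alpha\leq 1$, along a subsequence $\rho_\alpha\rightharpoonup^* \rho_\infty$ weakly-$*$ in $L^\infty(\O)$, with $\rho_\infty\in\mathcal M(\O)$ radial. By Lemma~\ref{Cl:Misc} the eigenvalues $\lambda_\alpha(\rho_\alpha)$ are bounded in $\R$ and the eigenfunctions $u_{\alpha,\rho_\alpha}$ are bounded in $W^{2,2}(\O)$; extracting further, $u_{\alpha,\rho_\alpha}\rightharpoonup u_\infty$ in $W^{2,2}(\O)$, strongly in $W^{1,2}_0(\O)$ and in $L^2(\O)$, and $\lambda_\alpha(\rho_\alpha)\to \lambda_\infty$ for some $\lambda_\infty\in\R$. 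The crucial simplification compared with Lemma~\ref{Cl:CvInv} is that $1+\alpha\rho_\alpha\to 1$ \emph{strongly} in $L^\infty(\O)$, so I may pass to the limit in the weak form of \eqref{Eq:LambdaRho} without any $H$-convergence machinery: the pair $(u_\infty,\lambda_\infty)$ solves the Navier eigenvalue problem $\Delta(\Delta u_\infty)=\lambda_\infty u_\infty+\rho_\infty u_\infty$ with $u_\infty\geq 0$ and $\int_\O u_\infty^2=1$. By the characterisation of the first eigenvalue through a constant-sign eigenfunction (Lemma~\ref{Cl:Misc}), $\lambda_\infty=\lambda_0(\rho_\infty)$; the analogous, easier argument at the fixed density $\rho^*$ gives $\lambda_\alpha(\rho^*)\to\lambda_0(\rho^*)$.

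Passing to the limit in \eqref{eq:contr} yields $\lambda_0(\rho_\infty)\leq \lambda_0(\rho^*)$, and the uniqueness of the minimiser of $\lambda_0$ in $\mathcal M(\O)$ recalled at the start of this section forces $\rho_\infty=\rho^*$. Finally, since $\rho^*$ and each $\rho_\alpha$ are bang-bang, hence extreme points of $\mathcal M(\O)$, the classical result recalled in Subsection~\ref{sub:re} (see \cite{Visintin}) lifts weak-$*$ convergence to strong convergence in every $L^p(\O)$, $p\in[1,\infty)$, and in particular in $L^1(\O)$. I expect the main obstacle to be the continuity statement $\lambda_\alpha(\rho_\alpha)\to\lambda_0(\rho_\infty)$: one has to handle simultaneously the vanishing thickness perturbation $\alpha\rho_\alpha$ and the weak convergence of the zero-order term $\rho_\alpha$. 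The smallness of $\alpha$ removes the genuine $H$-convergence difficulty encountered in Lemma~\ref{Cl:CvInv}, but the identification of the limit eigenpair and the verification that $u_\infty$ is nonzero will still require careful bookkeeping of the normalisation and of the sign of the limit eigenfunction.
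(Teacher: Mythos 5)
Your proposal is correct and follows the same overall strategy as the paper: bang-bang reduction by concavity, extraction of a weak-$\ast$ $L^\infty$ closure point, identification of that closure point with $\rho^*$ through the uniqueness of the minimiser of $\lambda_0$, and the upgrade from weak-$\ast$ to strong $L^1$ convergence via the extremality of $\rho^*$ in $\mathcal M(\O)$ (the paper cites \cite[Proposition 2.1]{HenrotPierre}, you cite \cite{Visintin}; both apply). The one step you handle differently is the eigenvalue continuity along the sequence: the paper stays at the level of the Rayleigh quotient \eqref{Eq:DefLambda} and uses weak lower semicontinuity of $u\mapsto\int_\O(\Delta u)^2$ together with the strong $L^2$ convergence of the eigenfunctions to obtain the one-sided inequality $\lambda_0(\rho_\infty)\le\liminf_{\alpha\to0}\lambda_\alpha(\rho_\alpha)$, which is all that is needed to pass to the limit in \eqref{eq:contr}; you instead pass to the limit in the weak form of \eqref{Eq:LambdaRho} --- legitimately, since $\alpha\rho_\alpha\to 0$ strongly in $L^\infty$ removes the genuine two-phase difficulty of Lemma~\ref{Cl:CvInv} --- and then identify the limit pair as the first eigenpair of $\rho_\infty$ via the constant-sign characterisation of Lemma~\ref{Cl:Misc}. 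Your route yields slightly more (full convergence of $\lambda_\alpha(\rho_\alpha)$, not just a $\liminf$ bound) at the price of the PDE bookkeeping you anticipate; the paper's route is shorter because the inequality direction it needs is exactly the one that lower semicontinuity provides for free. One detail in your write-up that is worth retaining: the bang-bang replacement must indeed be carried out within the radial subclass, since the radial symmetry of $\rho_\alpha$ is used later in the proof of Theorem~\ref{Th:Stability}; the paper leaves this point implicit.
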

\begin{proof}[Proof of  Lemma \ref{Cl:Convergence}]
The first point follows from the concavity of the functional. For the second point, we first observe that, for any  weak-$\ast$ $L^\infty $  closure point $\rho_0$ of $\{\rho_\alpha\}_{\alpha\to 0}$, there holds $\lambda_0(\rho_0)\leq \underset{\alpha\to 0}{\lim\inf}\lambda_\alpha(\rho_\alpha)$: setting, for notational convenience, $u_\alpha:=u_{\alpha,\rho_\alpha}$, we have, from Lemma \ref{Cl:Misc}, a uniform $W^{2,2}(\O)$ bound on this sequence. This allows to pick a weak $W^{2,2}(\O)$ and strong $L^2(\O)$ closure point $u_0$ of a not relabelled subsequence. 
By  weak lower-semicontinuity of convex functions
$$\int_\O (\Delta u_0)^2\leq \underset{\alpha \to 0}{\lim \inf}\int_\O (1+\alpha \rho_\alpha)(\Delta u_\alpha)^2,$$ while 
$$\int_\O \rho_0u_0^2=\underset{\alpha \to 0}\lim \int_\O \rho_\alpha u_\alpha^2,\, \int_\O u_0^2=1.$$ From the variational formulation \eqref{Eq:DefLambda} of $\lambda_0(\rho_0)$, we obtain the conclusion.  Let us then observe that for any fixed $\rho\in \mathcal M(\O)$ (in particular for $\rho=\rho^*$), there holds $\lambda_0(\rho)=\underset{\alpha \to 0}\lim \lambda_\alpha(\rho)$. Passing to the limit in the inequality $\lambda_\alpha(\rho_\alpha)\leq \lambda_\alpha(\rho^*)$ we obtain $\lambda_0(\rho_0)\leq \lambda_0(\rho^*)$. Since $\rho^*$ is the unique minimiser of $\lambda_0$ we have $\rho_0=\rho^*$. As $\rho^*$ is an extreme point of $\mathcal M(\O)$, from \cite[Proposition 2.1]{HenrotPierre} this convergence is strong in $L^1$.
\end{proof}  

Henceforth,  we can hence assume that the  sequence $\{\rho_\alpha\}_{\alpha \to 0}$  fulfilling \eqref{eq:contr} consists of bang-bang functions. 
We use this information to proceed with the proof, which rests upon fine properties of the switch function. We  need to use one of the core idea of $H$-convergence to make sure this function is regular enough. Let us explain why some concepts from homogenisation are needed: if we consider the map $D\mapsto \lambda_\alpha(D)$  and   if we define $u_{\alpha,\rho}$ as the eigenfunction associated with $\lambda_\alpha(\rho)$, the simplicity of the eigenvalue ( Lemma  \ref{Cl:Misc}) ensures that $\rho\mapsto (\lambda_\alpha(\rho),\, u_{\alpha,\rho})$ is G\^ateaux-differentiable. Furthermore, for any $\rho \in \mathcal M(\B(0,R))$ and any admissible perturbation $h$ at $\rho$ (i.e a function $h$ such that, for every $\e>0$ small enough $\rho +\e h\in \mathcal M(\B(0,R))$), the G\^ateaux-derivatives $\dot u_{\alpha,\rho}$ and $\dot\lambda_\alpha(\rho)$ (we omit the dependency on $h$ for notational convenience) solve
\begin{equation}\label{Eq:GateauxDerivative}
\begin{cases}
\Delta \left((1+\alpha \rho)\Delta \dot u_{\alpha,\rho}\right)+\alpha\Delta\left(h\Delta u_{\alpha,\rho}\right)=\left(\lambda_{\alpha,\rho}{+}\rho\right)\dot u_{\alpha \rho}+\left(\dot \lambda_{\alpha,\rho}{+}h\right)\uar \text{ in }\B(0,R), 
\\ \duar=\Delta \duar=0\text{ on }\partial \B(0,R),
\\ \int_{\B(0,R)} \uar \duar=0.
\end{cases}
\end{equation}
Multiplying the equation by $\uar$, integrating by parts, and using the equation \eqref{Eq:LambdaRho} on $\uar$  we obtain the following expression for $\dot\lambda_\alpha(\rho)$:
\begin{equation}\label{Eq:LambdaDot}
\dot \lambda_\alpha(\rho)=\int_{\B(0,R)} h\left\{\alpha\left(\Delta \uar\right)^2-\uar^2\right\}.
\end{equation}
This leads to defining the switch function associated with  the  problem  as 
\begin{equation}\label{Eq:SwitchDeb}U_{\alpha,\rho}:= \alpha\left(\Delta \uar\right)^2-\uar^2   .\end{equation} In other words, with this approach, we have 
$\dot \lambda_\alpha(\rho)=\int_{\B(0,R)} U_{\alpha,\rho} h.$ Ideally, we would use Lemma \ref{Cl:Convergence} to approximate $U_{\alpha,\rho}$ by $U_{0,\rho^*}$ in the $\mathscr C^1$ norm. However, since $\Delta \uar$ is merely $L^\infty$, $U_{\alpha,\rho}$ is not regular enough. 
To overcome this problem, we  rely on some general ideas borrowed from  $H$-convergence and homogenisation theory \cite{Allaire,MuratTartar}. We  introduce, for any $\rho \in \mathcal M(\B(0,R))$, the harmonic mean $\mathscr J_-(\rho)$ of $1+\alpha \rho$,  defined as  
\begin{equation}\mathscr J_-(\rho):=\frac{1+\alpha}{1+\alpha(1-\rho)}.\end{equation}
We define an auxiliary eigenvalue $\Lambda_\alpha(\rho)$ as follows:
\begin{equation}
\Lambda_\alpha(\rho):=\min_{u\in W^{2,2}(\B(0,R))\cap W^{1,2}_0(\B(0,R))\, u\neq 0}\frac{\int_{\B(0,R)} \mathscr J_-(\rho)(\Delta u)^2-\int_{\B(0,R)} \rho u^2}{\int_{\B(0,R)} u^2}.
\end{equation} From this variational formulation, since $\rho\mapsto \mathscr J_-(\rho)$ is concave,  we have that   $\rho\mapsto \Lambda_\alpha(\rho)$ is concave  too.  If $\rho$ is a bang-bang function, that is, if $\rho=\mathds 1_E$ for some measurable subset $E$, then 
$\mathscr J_-(\rho)=1+\alpha \rho$ so that 
\begin{equation}\text{For any bang-bang $\rho$  one has that} \ \lambda_\alpha(\rho)=\Lambda_\alpha(\rho).
\end{equation}

Hence  for all $\alpha>0, $  we have that
$$\lambda_\alpha(\rho_\alpha)=\Lambda_\alpha(\rho_\alpha) \ \text{and} \  \lambda_\alpha(\rho^*)=\Lambda_\alpha(\rho^*).
$$
To see why this allows to overcome the aforementioned regularity issues, let us compute the G\^ateaux-derivative of the map $\rho \mapsto \Lambda_\alpha(\rho)$. Let us define $v_{\alpha,\rho}$  to be   the eigenfunction associated with $\Lambda_\alpha(\rho)$.  This  can be chosen positive  and normalized in $L^2$.  In particular, $v_{\alpha,\rho}$ solves
\begin{equation}\label{Eq:Valpha}\begin{cases}
\Delta\left(\mathscr J_-(\rho)\Delta v_{\alpha,\rho}\right)=\Lambda_\alpha(\rho)v_{\alpha,\rho}+\rho v_{\alpha,\rho}\text{ in }\B(0,R), 
\\v_{\alpha,\rho}=�\Delta v_{\alpha,\rho}=0\text{ on }\partial \B(0,R), 
\\ \int_{\B(0,R)} v_{\alpha,\rho}^2=1,\, v_{\alpha,\rho}\geq 0.\end{cases}\end{equation}
From the same arguments as in  Lemma  \ref{Cl:Misc}, $\Lambda_\alpha(\rho)$ is a simple eigenvalue, and so the map $\rho \mapsto \left(\Lambda_\alpha(\rho),\, v_{\alpha,\rho}\right)$ is G\^ateaux-differentiable and, for $\rho \in \mathcal M(\B(0,R))$ and an admissible perturbation $h$ at $\rho$, if we denote with a dot the G\^ateaux-differentiated quantities, the couple $\left(\dot \Lambda_\alpha(\rho),\, \dot v_{\alpha,\rho}\right)$ solves 
\begin{equation}\label{Eq:ValphaDot}
\begin{cases}
\Delta \left(\mathscr J_-(\rho) \Delta \dot v_{\alpha,\rho}\right)+\frac\alpha{1+\alpha} \Delta\left(h \mathscr J_-(\rho)^2 \Delta  v_{\alpha,\rho} \right)=&\left(\Lambda_\alpha(\rho)+\rho\right)\dot  v_{\alpha,\rho}\\&+\dot \Lambda_\alpha(\rho)  v_{\alpha,\rho}+h v_{\alpha,\rho}\text{ in }\B(0,R), 
\\� \dot v_{\alpha,\rho}=\Delta  \dot v_{\alpha,\rho}=0\text{ on }\partial \B(0,R), 
\\�\int_{\B(0,R)}  \dot v_{\alpha,\rho} v_{\alpha,\rho}=0.
\end{cases}
\end{equation} This equation has a unique solution by the Fredholm alternative.
Multiplying  the first equation in  \eqref{Eq:ValphaDot} by $ v_{\alpha,\rho}$, integrating by part and using \eqref{Eq:Valpha} yields 
\begin{equation}
\dot \Lambda_\alpha(\rho)=\int_{\B(0,R)} h\left\{\frac\alpha{1+\alpha}\mathscr J_-(\rho)^2 (\Delta  v_{\alpha,\rho})^2- v_{\alpha,\rho}^2\right\}.\end{equation}
The new switch function 
\begin{equation}
\psi_{\alpha,\rho}:= \frac\alpha{1+\alpha}\mathscr J_-(\rho)^2 (\Delta  v_{\alpha,\rho})^2- v_{\alpha,\rho}^2 \end{equation} is now more regular, since the function $\mathscr J_-(\rho)\Delta v_{\alpha,\rho}$ is itself the solution of  an elliptic problem.  Let us now consider the two bang-bang-densities $\rho_\alpha,\, \rho^* \in \mathcal M(\B(0,R))$. Instead of considering the path $t\mapsto \lambda_\alpha(\rho_\alpha+t(\rho^*-\rho_\alpha))$, which would lead to the irregular switch function  \eqref{Eq:SwitchDeb}, we  set $\rho_t:=\rho_\alpha+t(\rho^*-\rho_\alpha)$ and we  consider the path 
\begin{equation}
f_\alpha:t\mapsto  \Lambda_\alpha( \rho_t).\end{equation} 
 For $t\in [0;1]$, let us define 
 $v_t$  to be  the eigenfunction associated with $\Lambda_\alpha(\rho^*+t(\rho_\alpha-\rho^*))$ and 
\begin{equation}\Psi_t:=\frac\alpha{1+\alpha}\mathscr J_-(\rho_t)^2 (\Delta  v_{t})^2- v_{t}^2\end{equation} By  Lemma \ref{Cl:Convergence} and by  the mean value  Theorem, we  write
\begin{align}
  \lambda_\alpha(\rho^*)-\lambda_\alpha(\rho_\alpha)&=\Lambda_\alpha(\rho^*)-\Lambda_\alpha(\rho_\alpha)=\int_{\B(0,R)} \Psi_t (\rho^*-\rho_\alpha)\end{align}
for some $t=t(\alpha)\in [0;1]$. 
From Lemma \ref{Cl:Convergence},  we know that  $\rho_{t(\alpha)}\underset{\alpha \to 0}\rightarrow \rho^*$ strongly in $L^1(\B(0,R))$. From standard elliptic regularity,  there exists a constant $M>0$ such that $\Vert\mathscr J_-(\rho_{t(\alpha)})\Delta v_{t(\alpha)}\Vert_{\mathscr \UUU C^1(\B(0,R))}\leq M$.   Again  from elliptic regularity,  we also have that  $v_{t(\alpha)}\underset{\alpha\to 0}\rightarrow u_{0,\rho^*}$ in $\mathscr C^1$. Hence, $\Psi_{t(\alpha)}\underset{\alpha \to 0}\rightarrow -u_{0,\rho^*}^2$ in $\mathscr C^1$.  Since $\Psi_{t(\alpha)}$ is radial, the strict monotonicity \eqref{Eq:De}  implies that  $\rho^*$ is  the unique solution of 
\begin{equation}\label{Eq:TT}\inf_{\rho\in \mathcal M(\B(0,R))}\int_{\B(0,R)} \Psi_{t(\alpha)}\rho\end{equation}  for $\alpha>0$ small enough. Indeed, from \eqref{Eq:De} and the $\mathscr C^1$ convergence of $\left\{\Psi_{t(\alpha)}\right\}_{\alpha \to 0}$ to $-u_{0,\rho^*}^2$, for $\alpha>0$ small enough, $\mathbb B^*$ is the unique level set of $\Psi_{t(\alpha)}$ of volume $\rho_0$.
 
Hence, $\int_{\B(0,R)} \Psi_{t(\alpha)}(\rho_\alpha-\rho^*)\geq 0$, which in turn implies \UUU that \EEE $\Lambda_\alpha(\rho^*)-\Lambda_\alpha(\rho_\alpha)\leq0$.  By Lemma \ref{Cl:Convergence}, this leads to  \UUU contradicting \EEE \eqref{eq:contr} \UUU and \EEE concludes the proof of the theorem.

\section{Conclusion}
\label{sec:con}
In this article, we have studied several theoretical aspects  related with   the spectral optimisation of inhomogeneous plates. It is worth underlining that the existence result, Theorem \ref{Th:ExistMu},  is in sharp contrast with other results  in the context of  the optimisation of two-phase problems.

 Note moreover that  the stationarity of minimisers of $\lambda_\alpha$ , as $\alpha\to 0^+$  is proved with respect to  radial  competitors  only. We believe that the case of not radially symmetric competitors is presently out of reach, given the available rearrangement tools. In fact,  Theorem \ref{Th:PV0} indicates that the correct rearrangement when handling thickness optimisation  is expected to be  the {\it increasing} rearrangement,  whereas  previous results \cite{Anedda} point to the fact that optimisation with respect to the density  should rather involve  the {\it decreasing} rearrangement.

\section*{Acknowledgments}
E.~Davoli has been partially supported by the Austrian Science
  Funds (FWF) grants V662, I4052, Y1292, and F65, as well as by the
  OeAD-WTZ project CZ04/2019. I.~Mazari acknowledges support of the
 FWF grants I4052 and F65. U. Stefanelli
  acknowledges support of the FWF grants
  I4354, F65, I5149, and P\,32788, and by the OeAD-WTZ
project CZ 01/2021.

\end{document}